\tikzstyle{vertex}=[circle, draw, inner sep=0pt, minimum size=2pt]
\tikzset{every loop/.style={}} %removes arrows on edges in a graph
\newtheorem{theorem}{Theorem}[section]
\newtheorem{proposition}[theorem]{Proposition}
\newtheorem{lemma}[theorem]{Lemma}
\newtheorem{corollary}[theorem]{Corollary}
\theoremstyle{definition}
\newtheorem{definition}[theorem]{Definition} 
\newtheorem{example}[theorem]{Example}
\newtheorem{remark}[theorem]{Remark}
\newtheorem{question}{Question} 
\newtheorem{oquestion}{Open Problem}
\newcommand{\tcr}[1]{\textcolor{red}{#1}}
\newcommand{\tcb}[1]{\textcolor{blue}{#1}}
\newcommand{\tco}[1]{\textcolor{orange}{#1}}
\newcommand{\bb}{\mathbf{b}}
\newcommand{\pp}{\mathbf{p}}
\newcommand{\cc}{\mathbf{c}}
\newcommand{\Bn}{B_\ell}
\newcommand{\Bl}{B_{\ell,1}}
\newcommand{\Klo}{K_\ell^\circ}
\newcommand{\Cn}{C_\ell}
\renewcommand{\k}{j}
\newcommand{\Wmnk}{\mathbf{W}_{n,m,k}}
\newcommand{\Tmnk}{\mathbf{T}_{m+n,m-k,k}}
\newcommand{\M}{\textup{max}}
\newcommand{\Tlbk}{\mathbf{T}_{\ell,b,k}}
\newcommand{\rr}{r}
\newcommand{\rlv}{\vec{r}}
\renewcommand{\aa}{\mathbf{a}}
\newcommand{\touch}{\mathrm{t}}
\newcommand{\tv}{\vec{t}}
\newcommand{\NEpath}[4]{
    \fill[white!25]  (#1) rectangle +(#2,#3);
    \fill[fill=white]
    (#1)
    \foreach \dir in {#4}{
        \ifnum\dir=1
        -- ++(1,0)
        \else
        -- ++(0,1)
        \fi
    } |- (#1);
    \draw[help lines] (#1) grid +(#2,#3);
    \draw[dashed] (#1) -- +(#3,#3);
    \coordinate (prev) at (#1);
    \foreach \dir in {#4}{
        \ifnum\dir=1
        \coordinate (dep) at (1,0);
        \else
        \coordinate (dep) at (0,1);
        \fi
        \draw[line width=2pt,-stealth] (prev) -- ++(dep) coordinate (prev);
    };
}
\title{A combinatorial model for lane merging}  
\author[Bardenova]{Viktoriya Bardenova}
\address[V.~Bardenova]{Department of Mathematics, Florida Gulf Coast University, Fort Myers,FL}
\email{\textcolor{blue}{\href{mailto:vlbardenova6187@eagle.fgcu.edu}{vlbardenova6187@eagle.fgcu.edu}}}
\author[Insko]{Erik Insko}
\address[E.~Insko]{Department of Mathematics, Florida Gulf Coast University, Fort Myers, FL}
\email{\textcolor{blue}{\href{mailto:einsko@fgcu.edu}{einsko@fgcu.edu}}}
\author[Johnson]{Katie Johnson}
\address[K.~Johnson]{Department of Mathematics, Florida Gulf Coast University, Fort Myers, FL}
\email{\textcolor{blue}{\href{mailto:kjohnson@fgcu.edu}{kjohnson@fgcu.edu}}}
\author[Sullivan]{Shaun Sullivan}
\address[S.~Sullivan]{Department of Mathematics, Florida Gulf Coast University, Fort Myers, FL}
\email{\textcolor{blue}{\href{mailto:ssullivan@fgcu.edu}{ssullivan@fgcu.edu}}}
\date{\today}
\begin{document}

\maketitle

\begin{abstract}
    A two lane road approaches a stoplight. The left lane merges into the right just past the intersection. Vehicles approach the intersection one at a time, with some drivers always choosing the right lane, while others always choose the shorter lane, giving preference to the right lane to break ties. An arrival sequence of vehicles can be represented as a binary string, where the zeros represent drivers always choosing the right lane, and the ones represent drivers choosing the shorter lane. From each arrival sequence we construct a merging path, which is a lattice path determined by the lane chosen by each car.  We give closed formulas for the number of merging paths reaching the point $(n,m)$ with exactly $k$ zeros in the arrival sequence, and the expected length of the right lane for all arrival sequences with exactly $k$ zeros. Proofs involve an adaptation of Andre's Reflection Principle. Other interesting connections also emerge, including to: Ballot numbers, the expected maximum number of heads or tails appearing in a sequence of $n$ coin flips, the largest domino snake that can be made using pieces up to $[n:n]$, and the longest trail on the complete graph $K_n$ with loops.\\
    Keywords: lattice paths, ballot numbers, graph theory, bijections, dominoes, longest trails, traffic
\end{abstract}

\section{Introduction}

Imagine you are driving on a road with two lanes where there is a stoplight and soon after, the left lane will have to merge into the right. Some drivers will move to the right lane before the traffic light, regardless of its length. Others will choose the shortest lane, giving preference to the right lane when the lengths are equal.

We model this situation using a binary string called an {\it arrival sequence}, assuming cars approach the stoplight one at a time with plenty of time to choose their preferred lane. Cars that do not want to merge and that will always choose the right lane are denoted with $0$ and colored red in diagrams. Cars that prefer the shortest lane (with ties going to the right lane) are denoted by $1$ and colored green.

In Figure \ref{fig:cars}, the arrival sequence is $\bb=0011\tcb{1}001.$ The first car will always choose the right lane, no matter what. In this case, the second car is red and will also choose the right lane. The next three cars are green; two will choose the left lane and the third will choose the right as the lanes will be equal in length at that point. (Green cars that end up in the right lane anyway will appear as blue digits in arrival sequences throughout the paper, for extra clarity.) Cars 6 and 7 are red and will choose the right lane. Finally, car 8 is green and will choose the left lane.

\begin{figure}[]
\centering
\begin{subfigure}{ }
    \centering
    \begin{tikzpicture}[scale=0.9]
 \begin{scope}[scale=0.5]
     \shade[top color=red, bottom color=white, shading angle={45}]
    [draw=black,fill=red!20,rounded corners=1.2ex,very thick] (1.5,.5) rectangle (6.5,1.8);
    \draw[very thick, rounded corners=0.5ex,fill=black!20!blue!20!white,thick]  (2.5,1.8) -- ++(1,0.7) -- ++(1.6,0) -- ++(0.6,-0.7) -- (2.5,1.8);
    \draw[thick]  (4.2,1.8) -- (4.2,2.5);
    \draw[draw=black,fill=gray!50,thick] (2.75,.5) circle (.5);
    \draw[draw=black,fill=gray!50,thick] (5.5,.5) circle (.5);
    \draw[draw=black,fill=gray!80,semithick] (2.75,.5) circle (.4);
    \draw[draw=black,fill=gray!80,semithick] (5.5,.5) circle (.4);
    \draw[-,semithick] (0,3.5) -- (7,3.5);
    \draw[dashed,thick] (0,-0.5) -- (7,-0.5);
    \draw (4,1.2) node {1};
  \end{scope}
   \begin{scope}[xshift=100,scale=0.5] 
    \shade[top color=red, bottom color=white, shading angle={45}]
    [draw=black,fill=red!20,rounded corners=1.2ex,very thick] (1.5,.5) rectangle (6.5,1.8);
    \draw[very thick, rounded corners=0.5ex,fill=black!20!blue!20!white,thick]  (2.5,1.8) -- ++(1,0.7) -- ++(1.6,0) -- ++(0.6,-0.7) -- (2.5,1.8);
    \draw[thick]  (4.2,1.8) -- (4.2,2.5);
    \draw[draw=black,fill=gray!50,thick] (2.75,.5) circle (.5);
    \draw[draw=black,fill=gray!50,thick] (5.5,.5) circle (.5);
    \draw[draw=black,fill=gray!80,semithick] (2.75,.5) circle (.4);
    \draw[draw=black,fill=gray!80,semithick] (5.5,.5) circle (.4);
    \draw[-,semithick] (0,3.5) -- (7,3.5);
    \draw[dashed,thick] (0,-0.5) -- (7,-0.5);
    \draw (4,1.2) node {2};
  \end{scope}
  \begin{scope}[xshift=200,scale=0.5] 
    \shade[top color=green, bottom color=white, shading angle={45}]
    [draw=black,fill=red!20,rounded corners=1.2ex,very thick] (1.5,.5) rectangle (6.5,1.8);
    \draw[very thick, rounded corners=0.5ex,fill=black!20!blue!20!white,thick]  (2.5,1.8) -- ++(1,0.7) -- ++(1.6,0) -- ++(0.6,-0.7) -- (2.5,1.8);
    \draw[thick]  (4.2,1.8) -- (4.2,2.5);
    \draw[draw=black,fill=gray!50,thick] (2.75,.5) circle (.5);
    \draw[draw=black,fill=gray!50,thick] (5.5,.5) circle (.5);
    \draw[draw=black,fill=gray!80,semithick] (2.75,.5) circle (.4);
    \draw[draw=black,fill=gray!80,semithick] (5.5,.5) circle (.4);
    \draw[-,semithick] (0,3.5) -- (7,3.5);
    \draw[dashed,thick] (0,-0.5) -- (7,-0.5);
    \draw (4,1.2) node {5};
  \end{scope}
  \begin{scope}[xshift=300,scale=0.5] 
    \shade[top color=red, bottom color=white, shading angle={45}]
    [draw=black,fill=red!20,rounded corners=1.2ex,very thick] (1.5,.5) rectangle (6.5,1.8);
    \draw[very thick, rounded corners=0.5ex,fill=black!20!blue!20!white,thick]  (2.5,1.8) -- ++(1,0.7) -- ++(1.6,0) -- ++(0.6,-0.7) -- (2.5,1.8);
    \draw[thick]  (4.2,1.8) -- (4.2,2.5);
    \draw[draw=black,fill=gray!50,thick] (2.75,.5) circle (.5);
    \draw[draw=black,fill=gray!50,thick] (5.5,.5) circle (.5);
    \draw[draw=black,fill=gray!80,semithick] (2.75,.5) circle (.4);
    \draw[draw=black,fill=gray!80,semithick] (5.5,.5) circle (.4);
    \draw[-,semithick] (0,3.5) -- (7,3.5);
    \draw[dashed,thick] (0,-0.5) -- (7,-0.5);
    \draw (4,1.2) node {6};
  \end{scope}
  \begin{scope}[xshift=400,scale=0.5] 
    \shade[top color=red, bottom color=white, shading angle={45}]
    [draw=black,fill=red!20,rounded corners=1.2ex,very thick] (1.5,.5) rectangle (6.5,1.8);
    \draw[very thick, rounded corners=0.5ex,fill=black!20!blue!20!white,thick]  (2.5,1.8) -- ++(1,0.7) -- ++(1.6,0) -- ++(0.6,-0.7) -- (2.5,1.8);
    \draw[thick]  (4.2,1.8) -- (4.2,2.5);
    \draw[draw=black,fill=gray!50,thick] (2.75,.5) circle (.5);
    \draw[draw=black,fill=gray!50,thick] (5.5,.5) circle (.5);
    \draw[draw=black,fill=gray!80,semithick] (2.75,.5) circle (.4);
    \draw[draw=black,fill=gray!80,semithick] (5.5,.5) circle (.4);
    \draw[-,semithick] (0,3.5) -- (7,3.5);
    \draw[dashed,thick] (0,-0.5) -- (7,-0.5);
    \draw (4,1.2) node {7};
  \end{scope}
\begin{scope}[yshift=-50,scale=0.5]
     \shade[top color=green, bottom color=white, shading angle={45}]
    [draw=black,fill=red!20,rounded corners=1.2ex,very thick] (1.5,.5) rectangle (6.5,1.8);
    \draw[very thick, rounded corners=0.5ex,fill=black!20!blue!20!white,thick]  (2.5,1.8) -- ++(1,0.7) -- ++(1.6,0) -- ++(0.6,-0.7) -- (2.5,1.8);
    \draw[thick]  (4.2,1.8) -- (4.2,2.5);
    \draw[draw=black,fill=gray!50,thick] (2.75,.5) circle (.5);
    \draw[draw=black,fill=gray!50,thick] (5.5,.5) circle (.5);
    \draw[draw=black,fill=gray!80,semithick] (2.75,.5) circle (.4);
    \draw[draw=black,fill=gray!80,semithick] (5.5,.5) circle (.4);
    \draw[-,semithick] (0,-0.5) -- (7,-0.5);
    \draw (4,1.2) node {3};
  \end{scope}
   \begin{scope}[xshift=100,yshift=-50,scale=0.5] 
    \shade[top color=green, bottom color=white, shading angle={45}]
    [draw=black,fill=red!20,rounded corners=1.2ex,very thick] (1.5,.5) rectangle (6.5,1.8);
    \draw[very thick, rounded corners=0.5ex,fill=black!20!blue!20!white,thick]  (2.5,1.8) -- ++(1,0.7) -- ++(1.6,0) -- ++(0.6,-0.7) -- (2.5,1.8);
    \draw[thick]  (4.2,1.8) -- (4.2,2.5);
    \draw[draw=black,fill=gray!50,thick] (2.75,.5) circle (.5);
    \draw[draw=black,fill=gray!50,thick] (5.5,.5) circle (.5);
    \draw[draw=black,fill=gray!80,semithick] (2.75,.5) circle (.4);
    \draw[draw=black,fill=gray!80,semithick] (5.5,.5) circle (.4);
    \draw[-,semithick] (0,-0.5) -- (7,-0.5);
    \draw (4,1.2) node {4};
  \end{scope}
  \begin{scope}[xshift=200,yshift=-50,scale=0.5] 
    \shade[top color=green, bottom color=white, shading angle={45}]
    [draw=black,fill=red!20,rounded corners=1.2ex,very thick] (1.5,.5) rectangle (6.5,1.8);
    \draw[very thick, rounded corners=0.5ex,fill=black!20!blue!20!white,thick]  (2.5,1.8) -- ++(1,0.7) -- ++(1.6,0) -- ++(0.6,-0.7) -- (2.5,1.8);
    \draw[thick]  (4.2,1.8) -- (4.2,2.5);
    \draw[draw=black,fill=gray!50,thick] (2.75,.5) circle (.5);
    \draw[draw=black,fill=gray!50,thick] (5.5,.5) circle (.5);
    \draw[draw=black,fill=gray!80,semithick] (2.75,.5) circle (.4);
    \draw[draw=black,fill=gray!80,semithick] (5.5,.5) circle (.4);
    \draw[-,semithick] (0,-0.5) -- (7,-0.5);
    \draw (4,1.2) node {8};
  \end{scope}
  \begin{scope}[xshift=300,yshift=-50,scale=0.5]
   \draw[-,semithick] (0,-0.5) -- (14,-0.5);
  \end{scope}
  \draw [very thick] [<-](0,-2.5)--(2,-2.5) node[right]{\text{Direction of traffic}};
\end{tikzpicture}
    \end{subfigure} \hspace{3cm}
    \begin{subfigure}{ }
 \centering 
 \begin{tikzpicture}[scale=1.1]
    \NEpath{0,0}{5}{5}{0,0,1,1,0,0,0,1};
    \draw[blue, line width=2pt,-stealth] (2,2) -- ++(0,1);
    \end{tikzpicture}
    \end{subfigure}
    \caption{Eight cars waiting to merge and the corresponding merging path for $\bb=0011\tcb{1}001.$}
    \label{fig:cars}
\end{figure}

To each arrival sequence, we can assign a (decorated) lattice path, which we call a {\it merging path}.  For instance, the merging path for the arrival sequence $\bb=0011\tcb{1}001$ is also shown in Figure~\ref{fig:cars}. When a green car ends up staying in the right lane, we say that the merging path {\it bounces} off the diagonal, and we decorate the corresponding upward step by highlighting it in blue. Merging paths without any bounces are the famous \textit{ballot paths} (i.e.~lattice paths that do not cross below the diagonal). Hence, merging paths generalize the ballot paths  which are used to enumerate the number of ways the ballots in a two-candidate election can be counted so that the winning candidate remains in the lead at all times \cite{AR08,A87,NS10,R08}.  

An excellent and thorough history of lattice path enumeration is available in Humphreys' survey paper from 2010 \cite{H10}, which also includes a discussion of the reflection principle that is used in this paper. Humphreys provides context and further reading for applications as wide-ranging as games \cite{W01} to electrostatics \cite{M92}, number theory \cite{M93,S18} to statistics \cite{K03, N79}. We remark that there is a rich history of discovering bijections between lattice paths and other mathematical objects \cite{K65,M79,S12}.

Before proceeding, let's set our notation conventions regarding arrival sequences and merging paths.  We denote the total number of cars in the arrival sequence by $\ell$, and let $B_\ell$ denote the set of all arrival sequences of length $\ell$. We denote the final number of cars in the right lane of an arrival sequence $\bb$ by $\rr(\bb)$, and then the length of the left lane must be $\ell-\rr$. The number of zeros in an arrival sequence will be denoted $k$, and the number of ones must be $\ell-k.$ In the previous example $\ell=8$, $r=5$, and $k=4.$

The original motivating questions we answer in this paper are:

\begin{question} \label{question:1}
What is the expected length $\mathbb{E}[\ell]$ of the right lane when we consider all possible arrival sequences of length $\ell$?
\end{question}

\begin{question} \label{question:2}
What is the expected length $\mathbb{E}[\ell,k]$ of the right lane when we consider all possible arrival sequences of length $\ell$ containing exactly $k$ zeros?
\end{question}

\begin{example}
When $\ell=2$ the collection of arrival sequences is $B_2=\{00, 10,01, 11\}$, and we calculate the sum of the right lane lengths as $R(B_2) = \sum_{\bb\in B_2} \rr(\bb) = 2+2+1+1= 6.$ So $\mathbb{E}[2]=R(B_2)/2^2=1.5$.
When $\ell=3$, $R(B_3)=\sum_{\bb\in B_3} \rr(\bb) = 18$, and the expected length of the right lane in a randomly selected arrival sequence is $\mathbb{E}[3]=R(B_3)/2^3=2.25$.
\end{example}

\begin{example}
The collection of arrival sequences of length $\ell=4$ with exactly $k=2$ zeros is $B_{4,2}=\{0011, 0101, 0110, 1001, 1010, 1100\}.$ In this case, $$ R(B_{4,2}) = \sum_{\bb\in B_{4,2}} \rr(\bb) = 2+2+3+3+3+3=16.$$ The expected length of the right lane of a randomly selected arrival sequence in $B_{4,2}$ is $\mathbb{E}[4,2]=R(B_{4,2})/\binom{4}{2}=\frac{8}{3}$.
\end{example}

Section~\ref{sec:mergingpaths} is dedicated to answering Question~\ref{question:1}.  To do this, we count the number of merging paths that begin at $(0,0)$ and end at $(n,m)$. We call the number of such merging paths $M_{n}(m)$.  Our first main result, Theorem~\ref{theorem:closed},  describes closed formulas for the numbers $M_n(m)$ in terms of binomial coefficients.  We then use Theorem~\ref{theorem:closed} to prove Theorem~\ref{thm:E_l} which answers Question~\ref{question:1}. We show that as $\ell$ tends to infinity, $\mathbb{E}[\ell]/\ell$  tends to $\frac{1}{2}$ in Corollary~\ref{corollary:limit}. 

\begin{figure}[h!]
\begin{tikzpicture}
    \NEpath{0,0}{5}{5}{0,0,0,1,1,1,0,0,1,1};
    \draw[blue, line width=2pt,-stealth] (0,0) -- ++(0,1);
    \end{tikzpicture} \hspace{12pt}
\begin{tikzpicture}
    \NEpath{0,0}{5}{5}{0,1,0,1,0,1,0,0,1,1};
    %\draw[red, line width=2pt,-stealth] (0,0) -- ++(0,1);
    \draw[blue, line width=2pt,-stealth] (1,1) -- ++(0,1);
    \draw[blue, line width=2pt,-stealth] (3,3) -- ++(0,1);
    \end{tikzpicture}
\caption{Merging paths for $\bb = \tcb{1}001110011$ and $\bb = 01\tcb{1}101\tcb{1}011$.}
\label{fig:mergingpath1}
\end{figure}

We find that the sum of right lane lengths $R(B_\ell) = \sum_{\bb\in B_\ell} \rr(\bb)$ when summing over all arrival sequences of length $\ell$ results in the integer sequence \href{https://oeis.org/A230137}{A230137}:
\begin{equation} 0,2,6,18,44,110,252,588,1304, 2934, 6380, 14036, 30120, 65260, 138712,   \ldots \label{eq:sloane} \end{equation}

 Sloane's Online Encyclopedia of Integer Sequences notes that the sequence in \eqref{eq:sloane} divided by $2^\ell$, which we denote $\mathbb{E}[\ell] = R(B_\ell)/2^\ell$, is also the expected value of the maximum of the number of heads and the number of tails when $\ell$ fair coins are tossed \cite{oeis}. This fact suggests there is an explicit bijection between the set of merging paths of length $\ell$ and the sets of $\ell$ coin flips that sends the length of the right lane in a merging path to the maximum number of heads or tails in the corresponding sequence. 
 
 In Section \ref{section:bijection} we define a map with this property and prove it is indeed a bijection.  This bijection also gives a combinatorial proof of Theorem~\ref{theorem:closed}.
 
Section~\ref{section:kredcars} is dedicated to answering Question~\ref{question:2}, where we show that as $\ell$ tends to $\infty$,
$\mathbb{E}[\ell,k]/\ell $ tends to $\frac{1}{2}$ when $\ell \geq 2k$, and when $\ell < 2k$ the expected value $\mathbb{E}[\ell,k]/\ell $ tends to the ratio $k/\ell$.

In Section~\ref{section:domino} we explore a curious correspondence between the collection of arrival sequences with $1$ red car, longest domino snakes, and longest trails in complete graphs with loops.  We describe an explicit bijection that maps each arrival sequence to a subset of edges in a longest trail in the complete graph with loops or equivalently, a subset of dominoes in the longest domino snake with dominoes.

In Section~\ref{section:equiv_classes} we consider the final structure of an arrival sequence determined by the right lane vector that records the order of the cars in the right lane, disregarding their color. We then partition the collection of arrival sequences into color-blind equivalence classes based on their right lane vector, show that each color-blind equivalence class has even number of elements, and moreover, that the number of elements in each class is a power of 2.
We end this paper with a list of open problems and future directions.

\section{Merging Paths}\label{sec:mergingpaths}
We record the information of an arrival sequence with a decorated lattice path where right steps represent green cars (1s), up steps represent either red cars (0s), and decorated upward steps (or bounces) represent green cars choosing the right lane when the lanes are even. (Recall that when a green car is forced to choose the right lane, we call this a ``bounce", since the corresponding merging path bounces up off the line $y=x$ when it would normally head right.)
Let $M_n(m)$ be the number of such lattice paths reaching the point $(n,m)$, where the lattice path starts at the origin. Each of these \textit{merging paths} represents a sequence of cars that ends with $n$ cars in the right lane and $m$ cars in the left lane. As we remarked earlier, these paths never cross the diagonal $y=x$, so they are \textit{ballot paths}.  As an example, two merging paths are depicted above in Figure~\ref{fig:mergingpath1}, and places where they bounce off the diagonal are highlighted in blue.

The following table counts these paths for small values of $m$ and $n$, and the subsequent lemma gives a recurrence relation for these numbers.

 \begin{table}[h!]
\begin{tabular}{l||llllllllll}
$m$ & 2 & 16 & 72 & 240 &660  & 1584 & 3432 &3432  &    \\ 
$6$ & 2 & 14 & 56 & 168 & 420 & 924 & 924 &  &    \\ 
$5$ & 2 & 12 & 42 & 112 & 252 & 252 &  &  &    \\ 
$4$ & 2 & 10 & 30 & 70 & 70 &  &  &  &    \\ 
$3$ & 2 & 8 & 20 & 20 &  &  &  &  &    \\ 
$2$ & 2 & 6 & 6 &  &  &  &  &  &   \\ 
$1$ & 2 & 2 &  &  &  &  &  &  &    \\ 
$0$ & 1 &  &  &  &  &  &  &  &   \\ \hline\hline
& $0$ & $1$ & $2$ & $3$ & $4$ & $5$ & $6$ & $7$ & $n$ %
\end{tabular} \caption{Number of merging paths ending at $(n,m)$} \label{table:Mnm}
\end{table}

\begin{lemma}\label{lemma:recurrence}
The numbers $M_n(m)$ satisfy the following recurrence relation.
$$\begin{array}{ccll}
M_n(m)&=&M_{n-1}(m)+M_n(m-1) & \textnormal{ for } m>n+1, n>0,  \\
M_n(m)&=&M_{n-1}(m)+2M_n(m-1) & \textnormal{ for } m=n+1, n>0, \\
M_n(n)&=&M_{n-1}(n) & \textnormal{ for } n>0  \\
M_0(0)&=&1  \textnormal{ and } M_0(m)=2 & \textnormal{ for } m>0 \\
\end{array}$$
\end{lemma}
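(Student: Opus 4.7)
The plan is to condition on the last step of a nonempty merging path ending at $(n, m)$. Any such path arrives at $(n, m)$ from exactly one of its two lattice predecessors: either $(n-1, m)$ via a rightward step (a green car joining the left lane), or $(n, m-1)$ via an upward step (a car entering the right lane). The recurrence then follows by counting how many distinct decorated merging paths each predecessor supports, paying careful attention to when an upward step must carry a bounce decoration. The bounce bookkeeping is the only nontrivial ingredient, and it is activated exactly when the predecessor sits on the diagonal.

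For the generic interior case $m > n + 1$ with $n > 0$, both $(n-1, m)$ and $(n, m-1)$ lie strictly above the diagonal. A rightward step from $(n-1, m)$ corresponds to a green car choosing the (strictly shorter) left lane, while an upward step from $(n, m-1)$ corresponds to a red car going to the right lane; neither step admits a bounce, since bounces occur only when the lanes are tied. Each predecessor therefore contributes a unique extension, yielding $M_n(m) = M_{n-1}(m) + M_n(m-1)$.

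For the boundary case $m = n + 1$ with $n > 0$, the predecessor $(n, n)$ lies on the diagonal. The right-step predecessor $(n-1, n+1)$ is handled exactly as before, contributing $M_{n-1}(n+1)$. However, the upward step from $(n, n)$ to $(n, n+1)$ admits two distinct interpretations: the next car can be red (an undecorated up step) or green (a decorated bounce, since the lanes are tied and the green car is forced into the right lane by the tiebreak). These give two distinct merging paths per predecessor path, so $M_n(n+1) = M_{n-1}(n+1) + 2 M_n(n)$. For the case $m = n$ with $n > 0$, the lattice point $(n, n - 1)$ lies below the diagonal and is visited by no merging path, leaving only the rightward step from $(n-1, n)$, and giving $M_n(n) = M_{n-1}(n)$.

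Finally, the base cases are checked directly. The unique merging path to $(0, 0)$ is the empty path, so $M_0(0) = 1$. For $m \geq 1$, any path to $(0, m)$ is an all-up run: the first step leaves the diagonal point $(0,0)$ and may be taken by either a red car or a green bouncing car, whereas each subsequent upward step from $(0, i)$ with $i \geq 1$ must come from a red car (a green car would strictly prefer the empty left lane). This yields exactly $2$ paths, so $M_0(m) = 2$. The main obstacle, such as it is, is the doubling in the $m = n + 1$ case, where one must verify that the red-versus-bounce ambiguity produces genuinely distinct decorated paths, hence distinct arrival sequences; the remaining cases are routine ballot-path decomposition.
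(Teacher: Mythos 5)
Your proposal is correct and follows essentially the same route as the paper: both classify a path to $(n,m)$ by its final step, observe that an up step out of a diagonal point $(n,n)$ can be either a red car or a forced green bounce (producing the factor of $2$ when $m=n+1$), and verify the base cases $M_0(0)=1$, $M_0(m)=2$ directly. The paper merely dresses the same case analysis in a double induction on $n$ and $m$, so there is no substantive difference.
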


\begin{proof}
We prove the recurrence relation by induction on $n$. If $n=0$ then the only merging paths reaching $(0,m)$ are $00\ldots0$ and $10\cdots0$. Thus, $M_0(m)=2$ for $m>0$. The empty path is the only path reaching $(0,0)$, thus $M_0(0)=1$

Now suppose the recurrence relation is true for $n<\k$ and consider paths reaching $(\k,m)$. We now start a second induction argument on $m\geq \k$. If $m=\k$, then the only paths reaching $(\k,\k)$ come from paths reaching $(\k-1,\k)$ by appending a 1; thus

\[
M_\k(\k)=M_{\k-1}(\k).
\] If $m=\k+1$, then the paths reaching $(\k,\k+1)$ either come from paths reaching $(\k-1,\k+1)$ by appending a 1, or from paths reaching $(\k,\k)$ by appending either a 0 or a 1, since both would result in an up step in this case. Thus,

$$M_\k(\k+1)=M_{\k-1}(\k+1)+2M_\k(\k).$$

Finally, if $m>\k+1$, then the paths reaching $(\k,m)$ either come from paths reaching $(\k-1,m)$ by appending a 1, or from paths reaching $(\k,m-1)$ by appending a 0. Thus, 

\[
M_\k(m)=M_{\k-1}(m)+M_\k(m),
\] completing both induction arguments.
\end{proof}

Notice that ``folding" Pascal's triangle in half, i.e.~doubling the off-center values, gives the values in Table \ref{table:Mnm}. Thus, we have the following theorem that provides a closed formula for the numbers $M_n(m)$.

\begin{theorem}\label{theorem:closed}
The numbers $M_n(m)$ have the following closed formulas:

\begin{align}
M_n(m)&=2\dbinom{m+n}{n} \text{ for } m>n, \text{ and } \label{eqn:Mn1}
\\
M_n(n)&=\dbinom{2n}{n}. \label{eqn:Mn2}
\end{align}
\end{theorem}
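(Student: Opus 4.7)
The plan is to verify directly that the proposed closed formulas satisfy the recurrence and boundary conditions established in Lemma~\ref{lemma:recurrence}. Since those conditions uniquely determine the sequence $M_n(m)$, matching them suffices to prove the theorem. I would induct on $n$, with the base case handling $M_0(0) = \binom{0}{0} = 1$ and $M_0(m) = 2\binom{m}{0} = 2$ for $m > 0$, both of which are immediate.

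For the inductive step, each of the three remaining recurrences requires only a short manipulation. In the generic case $m > n+1$, the inductive hypothesis gives $M_{n-1}(m) = 2\binom{m+n-1}{n-1}$ and $M_n(m-1) = 2\binom{m+n-1}{n}$, so Pascal's rule yields the sum $2\binom{m+n}{n}$, as required. In the bounce case $m = n+1$, the doubled term $2 M_n(n) = 2\binom{2n}{n}$ combines with $M_{n-1}(n+1) = 2\binom{2n}{n-1}$, and one more application of Pascal's rule produces the target $2\binom{2n+1}{n}$. On the diagonal $m = n$, the recurrence collapses to $M_n(n) = M_{n-1}(n) = 2\binom{2n-1}{n-1}$, and the symmetry $\binom{2n-1}{n-1} = \binom{2n-1}{n}$ together with Pascal's rule converts this into $\binom{2n-1}{n-1} + \binom{2n-1}{n} = \binom{2n}{n}$, matching formula~\eqref{eqn:Mn2}.

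The argument is entirely routine given Lemma~\ref{lemma:recurrence}, and I do not anticipate any real obstacle. The one point worth flagging is how the extra factor of $2$ appearing in the bounce recurrence and its absence in the diagonal recurrence are precisely what is needed for Pascal's rule to reproduce the closed formulas; this is the combinatorial content behind the ``folding Pascal's triangle in half'' observation that motivates the theorem, and keeping the three cases straight is the only bookkeeping required.
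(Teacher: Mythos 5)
Your proposal is correct and matches the paper's own proof, which likewise verifies that the closed formulas satisfy the recurrence and boundary conditions of Lemma~\ref{lemma:recurrence} (using the identities $2\binom{2n+1}{n}=2\binom{2n}{n-1}+2\binom{2n}{n}$ and $\binom{2n}{n}=2\binom{2n-1}{n-1}$). Your write-up is in fact slightly more complete, since the paper leaves the generic case $m>n+1$ implicit.
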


\begin{proof}
We prove that equations (\ref{eqn:Mn1}) and (\ref{eqn:Mn2}) satisfy the recurrence relation in Lemma \ref{lemma:recurrence}.
First, notice that if $n=0$, then $2\binom{m+0}{0}=2$ for $m>0$ and $\binom{2\cdot0}{0}=1$.

From equation (\ref{eqn:Mn1}), we use the identity \[2\binom{2n+1}{n}=2\binom{2n}{n-1}+2\binom{2n}{n}\] to conclude that
$M_n(m)=M_{n-1}(m)+2M_n(m-1)$ for $m=n+1$ and $n>0.$
Next, using the identity \[\binom{2n}{n}=2\binom{2n-1}{n-1}\] for $n>0$, we get that
$M_n(n)=M_{n-1}(n)$ for $n>0.$
\end{proof}

One may reasonably ask for a combinatorial proof of Theorem~\ref{theorem:closed}, and in fact, one will be provided in Corollary \ref{corollary:combinatorial}.

We can now consider the expected length of the right lane for an arrival sequence of $\ell$ cars, which we denote  $\mathbb{E}[\ell] = R(B_\ell)/2^{\ell}. $ The following theorem gives a closed formula for $\mathbb{E}[\ell]$. 

\begin{theorem} \label{thm:E_l}
Let $\ell\in\mathbb{N}$. If $\ell$ is odd, then

\[
\mathbb{E}[\ell]=\dfrac{\ell}{2^\ell}\left(2^{\ell-1}+\dbinom{\ell-1}{(\ell-1)/2}\right).
\]
If $\ell$ is even, then

\[
\mathbb{E}[\ell]=\dfrac{\ell}{2^{\ell+1}}\left(2^\ell+\dbinom{\ell}{\ell/2}\right).
\]
\end{theorem}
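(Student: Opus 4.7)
My plan is to compute $R(B_\ell) = \sum_{\bb \in B_\ell} r(\bb)$ by partitioning $B_\ell$ according to the endpoint of the associated merging path. Since each merging path lives in the region $m \geq n$ and the right lane holds the larger of the two final lane counts, I would index endpoints by the smaller coordinate $n$ and write $r(\bb) = \ell - n$, so that
\[
R(B_\ell) \;=\; \sum_{n=0}^{\lfloor \ell/2\rfloor} (\ell - n)\, M_n(\ell - n).
\]
Substituting the closed forms from Theorem~\ref{theorem:closed}, i.e., $M_n(m) = 2\binom{n+m}{n}$ whenever $m > n$ and $M_n(n) = \binom{2n}{n}$ on the diagonal, turns the problem into a purely algebraic manipulation of binomial sums, and the argument then splits on the parity of $\ell$.

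When $\ell = 2k+1$ is odd, every admissible endpoint satisfies $m > n$, so only the off-diagonal formula contributes and
\[
R(B_\ell) \;=\; 2\sum_{n=0}^{k}(2k+1-n)\binom{2k+1}{n}.
\]
The absorption identity $(N-n)\binom{N}{n} = N\binom{N-1}{n}$ with $N = 2k+1$ reduces this to $2(2k+1)\sum_{n=0}^{k}\binom{2k}{n}$. A short symmetry argument using $\binom{2k}{n} = \binom{2k}{2k-n}$ together with $\sum_{n=0}^{2k}\binom{2k}{n} = 2^{2k}$ yields $\sum_{n=0}^{k}\binom{2k}{n} = \tfrac{1}{2}\bigl(2^{2k} + \binom{2k}{k}\bigr)$. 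Dividing by $2^\ell$ and rewriting $2k = \ell - 1$ produces the stated odd-$\ell$ formula.

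When $\ell = 2k$ is even, I would peel off the single diagonal endpoint $n = k$:
\[
R(B_\ell) \;=\; 2\sum_{n=0}^{k-1}(2k - n)\binom{2k}{n} \;+\; k\binom{2k}{k}.
\]
The same absorption identity with $N = 2k$ collapses the weighted sum to $4k \sum_{n=0}^{k-1}\binom{2k-1}{n}$. Since $2k-1$ is odd, the symmetry of Pascal's triangle splits row $2k-1$ exactly in half, giving $\sum_{n=0}^{k-1}\binom{2k-1}{n} = 2^{2k-2}$. Combining with the diagonal term yields $R(B_\ell) = k\bigl(2^{2k} + \binom{2k}{k}\bigr)$, and dividing by $2^\ell = 2^{2k}$ produces the even-$\ell$ formula.

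I do not anticipate a serious obstacle; the proof is essentially bookkeeping around two very standard binomial identities made available by Theorem~\ref{theorem:closed}. The one subtlety worth flagging is the boundary behavior at the diagonal in the even case: conflating $M_k(k) = \binom{2k}{k}$ with the off-diagonal values $M_n(2k-n) = 2\binom{2k}{n}$ would silently double-count the diagonal contribution, and isolating that single term is precisely what creates the even-versus-odd dichotomy in the final closed forms.
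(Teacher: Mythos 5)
Your proposal is correct and follows essentially the same route as the paper's proof: partition $B_\ell$ by the endpoint of the merging path, substitute the closed forms from Theorem~\ref{theorem:closed}, apply the absorption identity, and finish with the symmetric half-row sum $\sum_{n=0}^{k}\binom{2k}{n}=\tfrac12\bigl(2^{2k}+\binom{2k}{k}\bigr)$; indexing by the left-lane count $n$ rather than the right-lane count $i=\ell-n$ is only a reparametrization. The one difference is that you write out the even case explicitly (correctly isolating the diagonal term $M_k(k)=\binom{2k}{k}$), whereas the paper dismisses it as ``a similar argument.''
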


\begin{proof}
Suppose $\ell$ is odd, then a path of length $\ell$ does not end on the diagonal. There are $M_{\ell-i}(i)=2\binom{\ell}{i} $ arrival sequences with right lane length $i$, so

\begin{align*} 2^\ell\mathbb{E}[\ell] & =  R(B_\ell)   = \sum_{\bb \in B_\ell} r(\bb) \\ &  =2\sum\limits_{i=(\ell+1)/2}^\ell i\dbinom{\ell}{i}  \\ &  =2\ell\sum\limits_{i=(\ell-1)/2}^{\ell-1}\dbinom{\ell-1}{i} \\
& =\ell\left(2^{\ell-1}+\dbinom{\ell-1}{(\ell-1)/2}\right).
\end{align*} A similar argument shows the case when $\ell$ is even. 
\end{proof}

We can simplify this result as the number of cars grows large by using Stirling's approximation $n! \sim \sqrt{2\pi n}\left( {\frac{n}{e}}\right)^n $ \cite{M08} to derive an approximation for the central binomial coefficient $\binom{2n}{n}\sim\frac{2^{2n}}{\sqrt{n\pi}}$, which results in the following corollary.  

\begin{corollary}\label{corollary:limit}
\[
\lim\limits_{\ell\rightarrow\infty}\dfrac{\mathbb{E}[\ell]}{\ell}=\dfrac{1}{2}
\]
\end{corollary}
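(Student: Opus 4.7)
The plan is to start from the explicit formulas in Theorem~\ref{thm:E_l}, divide by $\ell$, and separate off a constant $1/2$ term from a vanishing correction involving a central binomial coefficient. Specifically, for odd $\ell$ one has
\[
\frac{\mathbb{E}[\ell]}{\ell} = \frac{1}{2^{\ell}}\left(2^{\ell-1} + \binom{\ell-1}{(\ell-1)/2}\right) = \frac{1}{2} + \frac{1}{2^{\ell}}\binom{\ell-1}{(\ell-1)/2},
\]
and for even $\ell$ one has
\[
\frac{\mathbb{E}[\ell]}{\ell} = \frac{1}{2^{\ell+1}}\left(2^{\ell} + \binom{\ell}{\ell/2}\right) = \frac{1}{2} + \frac{1}{2^{\ell+1}}\binom{\ell}{\ell/2}.
\]
Thus it suffices to show that both correction terms vanish as $\ell \to \infty$.

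Next, I would invoke Stirling's approximation in the form $\binom{2n}{n} \sim 2^{2n}/\sqrt{n\pi}$, as already noted in the paragraph preceding the corollary. For the even case, setting $\ell = 2n$ gives
\[
\frac{1}{2^{2n+1}}\binom{2n}{n} \sim \frac{1}{2^{2n+1}}\cdot \frac{2^{2n}}{\sqrt{n\pi}} = \frac{1}{2\sqrt{n\pi}},
\]
which tends to $0$. For the odd case, setting $\ell = 2n+1$ gives $\binom{\ell-1}{(\ell-1)/2} = \binom{2n}{n}$, so
\[
\frac{1}{2^{2n+1}}\binom{2n}{n} \sim \frac{1}{2\sqrt{n\pi}} \longrightarrow 0
\]
by the same estimate. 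In both parities the correction vanishes, so $\mathbb{E}[\ell]/\ell \to 1/2$.

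There is essentially no obstacle here: once the formulas of Theorem~\ref{thm:E_l} are in hand, the proof is a one-line algebraic rearrangement followed by the standard Stirling estimate. The only care needed is to treat odd and even $\ell$ separately (so that the limit is taken along both parity subsequences and hence along all of $\mathbb{N}$), and to note that the two correction terms are in fact the \emph{same} quantity $\binom{2n}{n}/2^{2n+1}$ under the substitutions $\ell = 2n$ and $\ell = 2n+1$, which collapses the two cases into a single Stirling estimate.
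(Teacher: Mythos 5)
Your proposal is correct and follows exactly the route the paper intends: the paper gives no separate proof of Corollary~\ref{corollary:limit} beyond the remark that it follows from Theorem~\ref{thm:E_l} via Stirling's approximation $\binom{2n}{n}\sim 2^{2n}/\sqrt{n\pi}$, and your computation simply fills in those (straightforward) details. The observation that both parity cases reduce to the same correction term $\binom{2n}{n}/2^{2n+1}$ is a nice small tidying of the argument.
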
 This means that for large $\ell$, the lanes tend to even out, and the effect of the bouncing is not very large. We will see in Section \ref{section:kredcars} that this limit will change, depending on the ratio of red cars to green cars.  

 \section{A length-preserving bijection between merging paths and coin flips} \label{section:bijection}
 Let $\Bn$ denote the set of $\ell$-bit binary strings that represent arrival sequences.
For each $\bb \in \Bn$, let $ \rr(\bb)$ denote the length of the right-hand lane.  
Let $z(\bb)=k$ denote the number of zeros in $\bb$ and $o(\bb)=\ell-z(\bb)$ denote the number of ones in $\bb$. Let $\Cn$ denote the collection of sequences of $\ell$ coin flips. For each $\cc \in \Cn$, let $h(\cc)$ denote the number of heads $H$ in $\cc$, $t(\cc)$ denote the number of tails $T$ in $\cc$, and let $\M(\cc):=\max\{h(\cc),t(\cc)\}$.

In this section we define
a function $\phi:\Bn\rightarrow \Cn$ that satisfies $\rr(\bb)=\M(\cc)$ whenever $\phi(\bb)=\cc$; that is, $\phi$ sends each arrival sequence $\bb$ with right lane length $\rr(\bb)$ to a sequence of coin flips $\cc$ whose maximum number $\M(\cc)$ of heads or tails equals $
\rr(\bb)$.  Then we show that $\phi$ is in fact a bijection.   
Before proceeding, we give a small example in Table~\ref{table:n4} to help illustrate how the map is defined. To differentiate the strings in $\Bn$ and $\Cn$ we label $0$ in $\cc$ as $H$ and $1$ in $\cc$ as $T$.

\begin{table}[h!]
\begin{center}
\begin{tabular}{|c|c|c|c|c|} \hline
$\bb$  & $\rr(\bb)$  & $\pp$  & $\cc$ & $\M(\cc)$ \\ \hline
0000 & 4& 0000 & HHHH & 4 \\
0100 & 3 & 0000 & HTHH & 3 \\
0010 & 3& 0000 & HHTH & 3\\
0001 & 3& 0000 & HHHT & 3\\
0011 & 2 & 0000 & HHTT & 2 \\
01\tcb{1}0 & 3 & 00\tcb{0}1 & HT\tcb{T}T& 3 \\
0101 & 2 & 0000 & HTHT & 2\\
01\tcb{1}1 & 2  & 00\tcb{0}1 & HT\tcb{T}H & 2\\
\tcb{1}000 & 4  &\tcb{0}111 & \tcb{T}TTT & 4 \\
\tcb{1}100 & 3 & \tcb{0}111 & \tcb{T}HTT & 3 \\
\tcb{1}010 & 3 & \tcb{0}111 & \tcb{T}THT & 3\\
\tcb{1}001 & 3 & \tcb{0}111 & \tcb{T}TTH & 3\\
\tcb{1}011 & 2  & \tcb{0}111 & \tcb{T}THH & 2 \\
\tcb{1}1\tcb{1}0 & 3  & \tcb{0}1\tcb{1}0 & \tcb{T}H\tcb{H}H& 3 \\
\tcb{1}101 & 2  & \tcb{0}111 & \tcb{T}HTH & 2\\
\tcb{1}1\tcb{1}1 & 2  & \tcb{0}1\tcb{1}0 & \tcb{T}H\tcb{H}T & 2\\ \hline
\end{tabular}
\end{center}
\caption{An example of the length preserving bijection given by $\phi$ }  \label{table:n4}
\end{table}

In order to define the function $\phi:\Bn\rightarrow \Cn$ we define a vector $\pp$ that records where the merging path of $\bb$ bounces off the diagonal. The vector $\pp$ will work like a light switch, being toggled on to $1$ directly after a bounce occurs and staying that way until another bounce toggles it back to 0.

\begin{definition}
The \textbf{parity vector} $\pp(\bb)$ (or  $\pp=p_1p_2 \cdots p_\ell$ if $\bb$ is understood) is defined from $\bb$ as follows:
\begin{itemize}
    \item The first entry in $\pp$ is always zero $p_1 =0$.
    \item  If the merging path of $\bb$ bounces off the diagonal with $b_i=1$ then $p_{i+1} = \overline{p_{i}}$.
    \item Otherwise $p_{i+1} = p_{i}$ for all $ 1 \leq i \leq \ell-1$. 
\end{itemize}

\end{definition}

  In terms of the arrival sequences, the parity of $\pp$ changes one step after a car that is labeled with a $1$ goes into the right lane.  In terms of lattice paths, the parity vector changes parity after the path bounces off the diagonal with a $1$.

Here are a few examples to help clarify these definitions:
\begin{itemize}
    \item If 
    $\bb=01\tcb{1}101\tcb{1}1$,   then
    $\pp =00\tcb{0}111\tcb{1}0$, and 
    $\cc =01\tcb{1}010\tcb{0}1 $ (or HT\tcb{T}HTH\tcb{H}T). 
    \item  If 
    $\bb=\tcb{1}001110011$, then
    $\pp=\tcb{0}111111111$, and 
    $\cc=\tcb{1}110001100$.
 \item If 
 $\bb=\tcb{1}01011\tcb{1}0$, then 
 $\pp=\tcb{0}11111\tcb{1}0$,  and
 $\cc=\tcb{1}10100\tcb{0}0$.
\end{itemize}

We are ready to define the function $\phi$ using the parity vector $\pp$.  The map $\phi:\Bn \rightarrow \Cn$ is given by  $\phi(\bb)=\bb+\pp = \cc$ for all $\bb \in \Bn$.

\begin{remark}
We also note that the parity vector $\pp$ can easily be defined from $\cc$ as well as $\bb$.  The parity vector $\pp$ starts with $0$, and its parity changes the first time that the number of tails ($1$s) in $\cc$ outnumbers the number of heads ($0$s), then the parity changes back when the number of heads ($0$s) outnumbers the number of tails ($1$s) and so on.
\end{remark}

\begin{proposition}\label{prop:onetoone}
The function $\phi: \Bn \rightarrow \Cn$ is one-to-one.
\end{proposition}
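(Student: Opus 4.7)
The plan is to prove injectivity by showing that $\phi$ can be inverted via a deterministic left-to-right sweep on $\cc$. Since $\phi(\bb) = \bb + \pp$ is a coordinatewise sum modulo $2$, inverting $\phi$ reduces to showing that the parity vector $\pp$ itself is uniquely determined by $\cc$; once $\pp$ is recovered, one sets $\bb = \cc + \pp$ to recover the preimage, which establishes injectivity.

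I would proceed by strong induction on $i$ with the joint statement that $p_i$ together with the prefix $b_1 \cdots b_{i-1}$ is uniquely determined by $c_1, \ldots, c_{i-1}$. The base case $i = 1$ is immediate: $p_1 = 0$ by definition, and the empty prefix is trivial. For the inductive step, assume $p_i$ and $b_1, \ldots, b_{i-1}$ have been recovered. Reading $c_i$, set $b_i := c_i + p_i \pmod{2}$, which is forced by the definition of $\phi$. With the prefix $b_1, \ldots, b_i$ now in hand, one simulates the merging process to compute the lane lengths $R_{i-1}$ and $L_{i-1}$ just before the $i$-th car arrives, and decides whether position $i$ is a bounce: this occurs precisely when $b_i = 1$ and $R_{i-1} = L_{i-1}$. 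The definition of $\pp$ then forces $p_{i+1} = \overline{p_i}$ in the bounce case and $p_{i+1} = p_i$ otherwise, which closes the induction.

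The main delicacy is the bookkeeping at the boundary between steps: to determine $p_{i+1}$ we need $b_i$, which in turn requires $p_i$; so the induction must be phrased so that $p_i$ is in hand \emph{before} $c_i$ is consumed. This is precisely why the joint statement above carries the parity bit one index ahead of the recovered prefix of $\bb$. A more conceptual alternative is to justify the characterization stated in the remark --- that $\pp$ flips parity exactly when the running head-tail tally of $\cc$ first changes sign --- but the inductive reconstruction above gives the needed injectivity directly and is the shortest route. Since every step of the inversion is deterministic, no two distinct $\bb \in \Bn$ can produce the same $\cc$, and therefore $\phi$ is one-to-one.
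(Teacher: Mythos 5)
Your proof is correct. The underlying fact powering both your argument and the paper's is identical: the parity bit $p_i$ is determined entirely by the prefix $b_1\cdots b_{i-1}$ (equivalently, it can be recovered from $c_1\cdots c_{i-1}$), because a bounce at position $j<i$ depends only on $b_1,\ldots,b_j$. The paper exploits this fact in the most economical way possible: take two distinct arrival sequences, look at the first index $i$ where they differ, note that their parity vectors must agree at that index, and conclude that the images $\bb+\pp$ and $\bb'+\pp'$ differ at position $i$. That is the entire proof --- three sentences. You instead package the same fact as a deterministic left-to-right inversion algorithm, recovering $p_i$ and the prefix of $\bb$ from the prefix of $\cc$ by induction and simulating the lane lengths to detect bounces. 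Your route is longer but buys slightly more: it exhibits an explicit inverse on the image of $\phi$, which is the natural first step if one also wants to understand surjectivity (the paper instead gets bijectivity later, in Corollary~\ref{corollary:combinatorial}, by a counting argument via Theorem~\ref{theorem:closed}). Your worry about the index bookkeeping --- carrying $p_i$ one step ahead of the recovered prefix --- is exactly the right delicacy to flag, and your resolution of it is sound. If you want the shortest write-up, the first-difference formulation is preferable; your version is a valid and somewhat more constructive alternative.
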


\begin{proof}
Suppose $\bb,\bb'\in B$ are distinct elements of $\Bn$. Let $i$ be the minimum index for which $b_i\neq b_i'$. Then by definition of $\pp$, $p_i=p_i'$ and so $b_i+p_i \not\equiv b_i'+p_i'\pmod{2}$. Therefore, $\phi(\bb)\neq\phi(\bb')$, and we conclude that $\phi$ is one-to-one.
\end{proof}

We will soon show that $r(\bb) = \M(\phi(\bb))$ in Theorem~\ref{theorem:bijection}, and the following lemma provides the base case for the induction argument in that proof.

\begin{lemma} \label{lemma:basecase}
Suppose $\bb=b_1b_2\cdots b_\ell$ is an arrival sequence whose merging path leaves the diagonal and never has a bounce in positions $b_2 \cdots  b_\ell$, i.e.~its parity vector is constant from $p_2 \cdots p_\ell$. If $\phi(\bb)=\cc$, then $\M(\cc) = \rr(\bb)$.  
\end{lemma}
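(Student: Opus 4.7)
The plan is to dispose of the lemma by a direct case analysis on $b_1$. Since $p_1 = 0$ by definition and the hypothesis pins $p_2, \ldots, p_\ell$ to a common value, $\pp$ is forced into one of exactly two shapes. If no bounce occurs at position $1$, then $p_2 = p_1 = 0$, and hence $\pp = 0^\ell$. If a bounce does occur at position $1$, then $p_2 = 1$ and hence $\pp = 01^{\ell-1}$. A bounce at the first position happens exactly when $b_1 = 1$, since the lanes start empty and the tie-breaking rule sends an initial green car into the right lane. Consequently the two shapes of $\pp$ line up with the two cases $b_1 = 0$ and $b_1 = 1$, and I would handle them in turn.

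In the case $b_1 = 0$, adding $\pp = 0^\ell$ to $\bb$ gives $\cc = \bb$, so $z(\cc) = z(\bb)$ and $o(\cc) = o(\bb)$. Because no bounce occurs anywhere, every up step in the merging path comes from a red car, so $\rr(\bb) = z(\bb)$. Applying the ballot path property (right lane $\geq$ left lane) at the endpoint gives $z(\bb) \geq o(\bb)$, and therefore
\[
\M(\cc) = \max\{z(\bb),\, o(\bb)\} = z(\bb) = \rr(\bb).
\]

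In the case $b_1 = 1$, the single bounce at position $1$ contributes one up step beyond the red cars, so $\rr(\bb) = z(\bb) + 1$. Adding $\pp = 01^{\ell-1}$ to $\bb$ mod $2$ leaves $c_1 = 1$ and flips every subsequent bit, from which one can count $o(\cc) = z(\bb) + 1$ (one contribution from $c_1$, plus one for each of the $z(\bb)$ zeros of $\bb$, all of which lie in positions $\geq 2$) and $z(\cc) = o(\bb) - 1$. Applying the ballot path inequality at the endpoint gives $\rr(\bb) = z(\bb) + 1 \geq o(\bb) - 1 = z(\cc)$, so $\M(\cc) = o(\cc) = z(\bb) + 1 = \rr(\bb)$.

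The whole argument is essentially bookkeeping once the two possible shapes of $\pp$ are identified, so I do not anticipate any real obstacle at this stage. The genuine technical challenge is deferred to Theorem~\ref{theorem:bijection}, where arbitrary interior bounces must be handled; I expect that proof to proceed by induction on the number of bounces, peeling off the first interior bounce of an arrival sequence and invoking the present base case on the resulting pieces with the appropriate parity shift.
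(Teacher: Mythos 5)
Your case analysis is correct and follows the same strategy as the paper's proof: identify the possible shapes of the parity vector, count zeros and ones of $\cc$ in each case, and use the ballot-path inequality to identify which of $h(\cc)$, $t(\cc)$ achieves the maximum. The one substantive difference is that the paper deliberately proves \emph{four} cases, $\pp=00\cdots0$, $\pp=01\cdots1$, $\pp=10\cdots0$, and $\pp=11\cdots1$, even though only the first two can arise for the lemma as literally stated (since $p_1=0$ by definition). The authors do this because in Lemma~\ref{lemma:diagonal} and Theorem~\ref{theorem:bijection} the lemma is applied to a suffix $b_{j+1}\cdots b_\ell$ of an arrival sequence, and the inherited parity $p_{j+1}$ there may equal $1$, in which case $c_{j+1}\cdots c_\ell$ is the complement of what your two cases produce. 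Your closing remark shows you are aware that a ``parity shift'' must be handled downstream, and your two-case proof is complete for the statement as given; just be aware that when you carry out the induction for Theorem~\ref{theorem:bijection} you will need either to supply the two complementary cases (as the paper does here) or to insert an explicit complementation argument showing that $\M$ is unchanged under flipping every bit of a block. Neither route presents any difficulty, but one of them is genuinely needed.
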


We have defined $\pp$ so it always starts with a zero, but for the sake of completeness, and because it will prove useful in our induction arguments, we will prove this lemma for all possible cases where $\pp$ starts with $0$ or $1$.

\begin{proof}
There are four cases to consider:
  $\pp=00 \cdots 0$, $\pp=01\cdots 1$, $\pp=10\cdots 0$, and $\pp=11\cdots 1$. First, if $\pp=00 \cdots 0$, then $\rr(\bb) = z(\bb)$ because otherwise there would be more $1$s than $0$s in $\bb$ which would force a $1$ to be in the right lane, contradicting the assumption that $\pp=00\cdots 0$.  Additionally, $\bb + \pp = \bb =\cc$ and $\rr(\bb) = z(\bb) = h(\cc) = \M(\cc)$.

If $\pp=01\cdots1$ then $\bb$ bounces off the diagonal at $b_1=1$ and nowhere else.  In this case, $\rr(\bb)= z(\bb)+1$ is the number of zeros in $\bb$ plus $1$ because that first entry $b_1=1$ ended up in the right lane, but all other $1$s will choose the left lane.    Since $\pp=01\cdots 1$, we have $t(\cc) = z(\bb)+1$ because $b_1+p_1=1+0=1$, and $b_i+p_i= \overline{b_i}$ for $2 \leq i \leq n$ so the first $1$ in $\bb$ corresponds to a tail in $\cc$, all other $1$s in $\bb$ correspond to heads in $\cc$, and every zero in $\bb$ corresponds to a tail in $\cc$. Finally, since $\bb$ never bounces back off the diagonal $\rr(\bb)=z(\bb)+1=t(\cc)=\M(\cc)$.  

 If $\pp=10\cdots 0$ then $\bb$ bounces off the diagonal at $b_1 = 1$ and nowhere else.  Hence $\cc= \overline{b_1}b_2 \cdots b_\ell$. Then $\rr(\bb) = z(\bb)+1$ because if $o(b_1\cdots b_j)>z(b_1b_2\cdots b_j)+1$ for any $2 \leq j \leq \ell$ then $\pp$ would switch its parity more than once. Hence $\rr(\bb) = z(\bb)+1 = h(\cc) =\M(\cc)$ because $\cc=\overline{b_1}b_2\cdots b_\ell = 0b_2\cdots b_\ell$ and there is one more zero/head in $\cc$ than zeros in $\bb.$

 Lastly, if $\pp=11\cdots 1$  then $\rr(\bb) = z(\bb)$ because otherwise there would be more $1$s than $0$s in $\bb$ which would force a $1$ to be in the right lane, which would contradict the assumption that $\pp=11\cdots 1$.  Additionally $\cc = \bb + \pp = \overline{\bb}$ so  $ \rr(\bb) = z(\bb) = t(\cc)= \M(\cc)$. 
\end{proof}

We note that the previous lemma does allow for the merging path to touch the diagonal again after $b_1$, but it does not allow the merging path to bounce off the diagonal anywhere in $b_2 \cdots b_\ell$. For instance, Figure~\ref{fig:latticepath1} shows two such paths. The first path corresponds to $\bb=\tcb{1}001110011$ where
$\pp=\tcb{0}111111111$ and 
$\cc=\tcb{1}110001100$.  Note that since $b_1=1$ in this example $\rr(\bb) = z(\bb)+1=5=t(\cc)=\M(\cc)$.
The second path corresponds to $\bb=0001110011$, for which $\pp=0000000000$ and 
$\cc=0001110011$.  Since $b_1=0$ in this example, $\rr(\bb) = z(\bb)=5=h(\cc)=\M(\cc)$.

\begin{figure}[h!]
\begin{tikzpicture}
    \NEpath{0,0}{5}{5}{0,0,0,1,1,1,0,0,1,1};
    \draw[blue, line width=2pt,-stealth] (0,0) -- ++(0,1);
    \end{tikzpicture} \hspace{12pt}
\begin{tikzpicture}
    \NEpath{0,0}{5}{5}{0,0,0,1,1,1,0,0,1,1};
    %\draw[red, line width=2pt,-stealth] (0,0) -- ++(0,1);
    \end{tikzpicture}
\caption{Two merging paths satisfying the hypotheses of Lemma~\ref{lemma:basecase}.}
\label{fig:latticepath1}
\end{figure}

The next lemma shows that if any lattice path corresponding to a merging path $\bb=b_1\cdots b_\ell$ ends on the diagonal, then $\rr(\bb) = \M(\phi(\bb)) = \frac{\ell}{2}$.

\begin{lemma} \label{lemma:diagonal}
If $\bb=b_1b_2\cdots b_\ell$ is an arrival sequence whose merging path ends on the diagonal, then $\rr(\bb) =\M(\cc) = \frac{\ell}{2}.$
\end{lemma}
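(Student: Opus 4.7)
The claim $r(\bb)=\ell/2$ is immediate: if the path ends at $(n,n)$ then $\ell=2n$, and since the right lane length is one of the coordinates of the endpoint, $r(\bb)=n=\ell/2$. The substantive assertion is $\M(\cc)=\ell/2$. Because $h(\cc)+t(\cc)=\ell$, we have $\M(\cc)\geq\ell/2$ with equality if and only if $h(\cc)=t(\cc)=\ell/2$, so it suffices to show that the heads and tails of $\cc$ are equinumerous.

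My plan is to decompose the merging path into \emph{arches}: the maximal subpaths lying strictly above the diagonal between consecutive diagonal visits. An arch from $(k,k)$ to $(k+s,k+s)$ has length $2s$, comprising exactly $s$ up steps and $s$ right steps. Because bounces occur only at diagonal lattice points and an arch meets the diagonal only at its two endpoints, the only position inside an arch where a bounce can possibly occur is its very first step, where the path leaves the diagonal. (In particular the last step of an arch must be a right step, since a return to $(k+s,k+s)$ from below the diagonal is forbidden.)

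This structural observation places each arch squarely within the hypothesis of Lemma~\ref{lemma:basecase}: reading the restricted arrival sequence $\bb_{\mathrm{arch}}$ together with the restricted parity vector $\pp_{\mathrm{arch}}$, the latter starts at either $0$ or $1$ (depending on the parity of the number of bounces before the arch) and then matches one of the four templates $00\cdots 0$, $01\cdots 1$, $10\cdots 0$, or $11\cdots 1$ handled in the proof of Lemma~\ref{lemma:basecase}. Applying that lemma to the arch yields $\M(\cc_{\mathrm{arch}}) = r(\bb_{\mathrm{arch}}) = s$, and since $h(\cc_{\mathrm{arch}})+t(\cc_{\mathrm{arch}})=2s$, this forces $h(\cc_{\mathrm{arch}})=t(\cc_{\mathrm{arch}})=s$.

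Summing head and tail counts across all arches gives $h(\cc)=t(\cc)=\ell/2$, hence $\M(\cc)=\ell/2$ as required. The main subtlety to dispatch is verifying that the restricted parity vector on each arch really does fall into one of the four cases handled in the proof of Lemma~\ref{lemma:basecase}; this is guaranteed by the observation that the global $\pp$ toggles only at bounce positions and no bounces occur strictly inside an arch, so $\pp_{\mathrm{arch}}$ is constant from its second entry onward.
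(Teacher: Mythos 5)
Your proof is correct, and it reaches the conclusion by a genuinely different (if closely related) route. The paper argues by induction on the number of bounces after the initial departure from the diagonal: it splits the word at the \emph{last} bounce, applies the inductive hypothesis to the prefix and Lemma~\ref{lemma:basecase} to the suffix, and then adds the two values of $\M$. You instead decompose the path into arches between \emph{every} pair of consecutive diagonal visits, apply Lemma~\ref{lemma:basecase} to each arch, and sum head and tail counts rather than values of $\M$. Both arguments rest on the same engine -- Lemma~\ref{lemma:basecase} applied to diagonal-to-diagonal segments whose restricted parity vector may start with $0$ or $1$, which is exactly why the paper proves that lemma in all four cases -- but your version buys two things. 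First, it dispenses with the induction entirely. Second, and more substantively, it repairs a small gap in the paper's write-up: the identity $\M(c_1\cdots c_j)+\M(c_{j+1}\cdots c_\ell)=\M(\cc)$ is not automatic, since a maximum is not additive; it holds only because each piece has equally many heads and tails. Your observation that $h(\cc)+t(\cc)=\ell$ forces $\M(\cc)\geq \ell/2$ with equality iff $h(\cc)=t(\cc)$, together with the arch-by-arch count $h=t=s$, makes that step airtight. Your structural claims (bounces occur only at departures from diagonal points, hence at most at the first step of an arch; the restricted parity vector is constant from its second entry onward) are all verified correctly.
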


\begin{proof}
We prove the claim by induction on the number of times the merging path bounces off the diagonal with a $1$ after originally leaving the diagonal (or equivalently, the number of times the parity vector changes its parity in $p_2\cdots p_\ell$).
The base case was proven in Lemma~\ref{lemma:basecase}. (To reiterate, 
if $b_1=0$, then $z(\bb) = h(\cc) = t(\cc) =\M(\cc) = \frac{\ell}{2}$.
If $b_1=1$, then $z(\bb) = o(\bb)-2$, and $\rr(\bb)=z(\bb)+1=t(\cc)=o(\bb)-1=h(\cc)=\M(\cc) = \frac{\ell}{2}$.)  

Now suppose that the claim holds if the merging path bounces returns and bounces off the diagonal $i$ times.  Let $\bb$ denote any arrival sequence whose merging path returns and bounces off the diagonal a total of $i+1$ times, and let $j$ be the index where the merging path of $\bb$ bounces of the diagonal for the last time. Then by induction we know $$r(b_1\cdots b_{j})=t(c_1 \cdots c_{j})=h(c_1 \cdots c_{j})=\M(c_1 \cdots c_{j}) =\frac{j}{2}$$  and $b_{j+1} \cdots b_\ell$ corresponds to a merging path that satisfies the hypotheses of Lemma~\ref{lemma:basecase}.  So
$r(b_{j+1}\cdots b_{\ell})=t(c_{j+1} \cdots c_{\ell})=h(c_{j+1} \cdots c_{\ell})=\M(c_{j+1} \cdots c_{\ell}) =\frac{\ell-j}{2}$.
Hence,
\begin{align*}
    \rr(\bb) &= \rr(b_1 \cdots b_{j}) + \rr(b_{j+1} \cdots b_\ell) \\
    &=   \frac{j}{2} + \frac{\ell-j}{2}  \\
    &= \M(c_1 \cdots c_{j}) + \M(c_{j+1} \cdots c_\ell)  \\
    &=\M(\cc).
\end{align*}  By the principal of mathematical induction the claim holds in general.
\end{proof}

The lattice path depicted in Figure~\ref{fig:latticepath2} shows a path that satisfies the hypotheses of Lemma~\ref{lemma:diagonal}. That path corresponds to $\bb=010011\tcb{1}011\tcb{1}1$ and bounces off the diagonal twice. We note that $\pp=000000\tcb{0}111\tcb{1}0$ and $\cc=010011\tcb{1}100\tcb{0}1$, so $\rr(\bb)=6=z(\bb)+2$, $z(\bb)=4$, and $\M(\cc) =6$.

\begin{figure}[h!]
\begin{tikzpicture}
    \NEpath{0,0}{6}{6}{0,1,0,0,1,1,0,0,1,1,0,1};
    \draw[blue, line width=2pt,-stealth] (3,3) -- ++(0,1);
    \draw[blue, line width=2pt,-stealth] (5,5) -- ++(0,1);
    \end{tikzpicture} 
    \caption{A merging path satisfying the hypotheses of Lemma~\ref{lemma:diagonal}.  } \label{fig:latticepath2}
\end{figure}
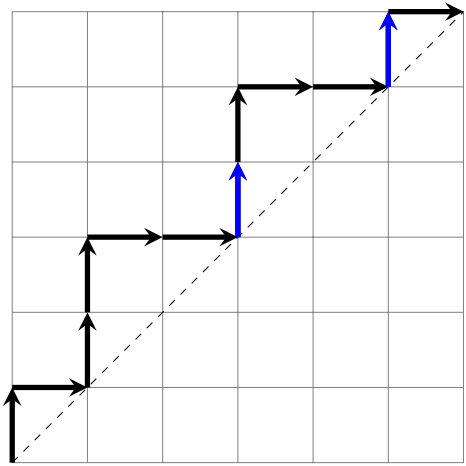

We are now ready to state the main result of this section.
\begin{theorem}\label{theorem:bijection}
The function $\phi: \Bn \rightarrow \Cn$ takes a string $\bb$ with right lane length $\rr(\bb)$ to a sequence of coin flips $\phi(\bb) =\cc$ with a max number of heads or tails satisfying $\M(\cc)=\rr(\bb)$.
\end{theorem}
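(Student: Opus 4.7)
The plan is to reduce the main theorem to the two lemmas already proven above by splitting the arrival sequence at the last moment its merging path visits the diagonal $y = x$. Let $j \in \{0, 1, \ldots, \ell\}$ be the largest index for which the merging path of $\bb$ is at a diagonal point $(j/2,\, j/2)$ after step $j$ (which forces $j$ to be even, and $j = 0$ simply means the path never returns to the diagonal). If $j = \ell$, then the path ends on the diagonal and Lemma~\ref{lemma:diagonal} immediately gives $\rr(\bb) = \M(\cc) = \ell/2$, so assume $j < \ell$.

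Decompose $\bb = \bb^{(1)}\bb^{(2)}$ with $\bb^{(1)} = b_1\cdots b_j$ and $\bb^{(2)} = b_{j+1}\cdots b_\ell$, and split $\cc$ and $\pp$ analogously. The first piece $\bb^{(1)}$ is an arrival sequence whose merging path ends on the diagonal at $(j/2,\, j/2)$, so Lemma~\ref{lemma:diagonal} yields $\rr(\bb^{(1)}) = \M(\cc^{(1)}) = j/2$ and, crucially for what follows, $h(\cc^{(1)}) = t(\cc^{(1)}) = j/2$. For the second piece $\bb^{(2)}$, the maximality of $j$ guarantees that the merging path of $\bb^{(2)}$, restarted at the origin, leaves the diagonal and never returns to it; in particular no bounces occur in the positions corresponding to $b_{j+2}, \ldots, b_\ell$. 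Combined with the inherited parity vector $\pp^{(2)}$, this is exactly the setup of Lemma~\ref{lemma:basecase}, which delivers $\rr(\bb^{(2)}) = \M(\cc^{(2)})$.

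To finish, right-lane counts add across the split, $\rr(\bb) = \rr(\bb^{(1)}) + \rr(\bb^{(2)})$. On the coin-flip side, $h(\cc) = h(\cc^{(1)}) + h(\cc^{(2)})$ and $t(\cc) = t(\cc^{(1)}) + t(\cc^{(2)})$, and because $h(\cc^{(1)}) = t(\cc^{(1)}) = j/2$ the maximum distributes cleanly:
\[
\M(\cc) \;=\; \tfrac{j}{2} + \max\bigl(h(\cc^{(2)}),\, t(\cc^{(2)})\bigr) \;=\; \M(\cc^{(1)}) + \M(\cc^{(2)}).
\]
Therefore $\M(\cc) = \rr(\bb^{(1)}) + \rr(\bb^{(2)}) = \rr(\bb)$, as desired.

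The one minor subtlety is that the parity vector $\pp^{(2)}$ inherited from $\bb$ may begin with $p_{j+1} = 1$ rather than the default $0$, so the standalone $\phi$-image of $\bb^{(2)}$ would in general differ from $\cc^{(2)}$. This is not a real obstacle: the proof of Lemma~\ref{lemma:basecase} explicitly verifies all four constant-from-position-$2$ parity patterns $00\cdots 0$, $01\cdots 1$, $10\cdots 0$, and $11\cdots 1$, and these are precisely the four possible shapes for $\pp^{(2)}$ depending on whether $p_{j+1}$ is $0$ or $1$ and whether $b_{j+1}$ is a bounce. I do not foresee any further obstacles; the real engine of the argument is the equality $h(\cc^{(1)}) = t(\cc^{(1)})$, which is exactly what makes $\M$ split across a diagonal-touch decomposition.
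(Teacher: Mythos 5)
Your proposal is correct and follows essentially the same route as the paper: split $\bb$ at the last diagonal touch $j$, apply Lemma~\ref{lemma:diagonal} to $b_1\cdots b_j$ and Lemma~\ref{lemma:basecase} to $b_{j+1}\cdots b_\ell$, and add. Your two explicit remarks --- that $h(c_1\cdots c_j)=t(c_1\cdots c_j)$ is what makes $\M$ additive across the split, and that Lemma~\ref{lemma:basecase} was deliberately proved for all four initial parities to cover the inherited $\pp^{(2)}$ --- are details the paper leaves implicit, but the argument is the same.
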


\begin{proof}
Let $\bb$ be any arrival sequence in $\Bn$ and $\cc=\phi(\bb)$. Let $j$ denote the last index where  the merging path defined by $\bb$ touches the diagonal.
  Then by Lemma~\ref{lemma:diagonal} we know that $r(b_1\cdots b_j)=\M(c_1\cdots c_j) = \frac{j}{2}$.  
Moreover, the path corresponding to $b_{j+1}\cdots b_\ell$ satisfies the hypotheses of Lemma~\ref{lemma:basecase} so $r(b_{j+1}\cdots b_\ell) = \M(c_{j+1} \cdots c_\ell)$.
We conclude that \[ \rr(\bb)= r(b_1 \cdots b_j) + r(b_{j+1} \cdots b_\ell) = \M(c_1 \cdots c_j) + \M (c_{j+1} \cdots c_\ell) =\M (\cc).  \qedhere \]
\end{proof}

 Figure~\ref{fig:latticepath3} illustrates how we break a merging path into two parts. The first part is the longest subpath $b_1\cdots b_j$ that ends on the diagonal and then bounces off with $b_{j+1}=1$, and the second part is the remaining subpath $b_{j+1}\cdots b_\ell$ that satisfies the hypotheses of Lemma~\ref{lemma:basecase}. This specific merging path corresponds to $\bb=010011\tcb{1}011\tcb{1}001$, and in this case $j=10$. The subpath $b_1 \cdots b_{j}=010011\tcb{1}011$ ends up on the diagonal at $(5,5)$.
The path $\tcb{b_{j+1}}b_{j+2}\cdots b_\ell= \tcb{1}001$ has $r(\tcb{b_{j+1}}b_{j+2}\cdots b_\ell) = 3 = z(\tcb{b_{j+1}}b_{j+2}\cdots b_\ell)+1$.  
For this particular $\bb=010011\tcb{1}011\tcb{1}001$, one can confirm that the parity vector is $\pp=000000\tcb{0}111\tcb{1}000$ and $\phi(\bb)=\bb+\pp=\cc=010011\tcb{1}100\tcb{0}001$. 
Moreover, $\rr(\bb) = r(b_1\cdots b_j) + r(b_{j+1}\cdots b_{\ell})=5+3 = \M(c_1\cdots c_j) + \M(c_{j+1} \cdots c_\ell) =\M(\cc) .$

\begin{figure}[h!]
\begin{tikzpicture}
    \NEpath{0,0}{8}{8}{0,1,0,0,1,1,0,0,1,1,0,0,0,1};
    \draw[blue, line width=2pt,-stealth] (3,3) -- ++(0,1);
    \draw[blue, line width=2pt,-stealth] (5,5) -- ++(0,1);
    \end{tikzpicture} 
\caption{An example of a general merging path.} \label{fig:latticepath3}
\end{figure}
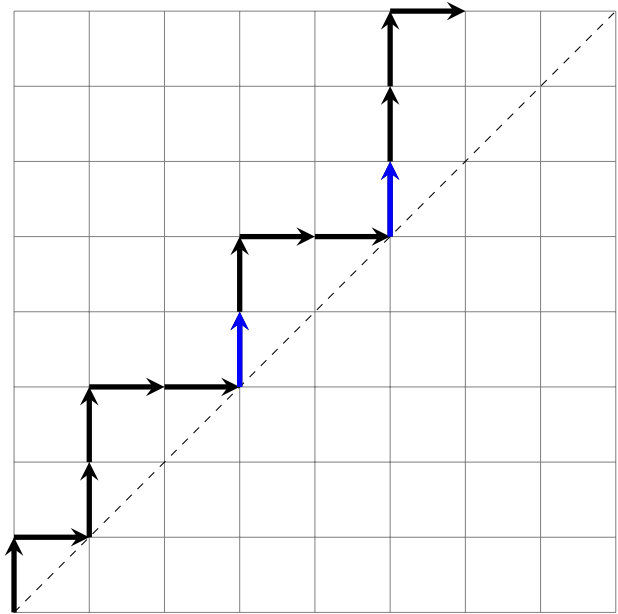

We use Theorem \ref{theorem:bijection} to give a combinatorial proof of Theorem \ref{theorem:closed} in the following corollary.

\begin{corollary}\label{corollary:combinatorial}
The function $\phi$ is a bijection. That is, the merging paths reaching the point $(n,m)$ are in one-to-one correspondence with heads/tails sequences $\cc$ with $m=\M(\cc)$.
\end{corollary}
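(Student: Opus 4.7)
The plan is to leverage Theorem~\ref{theorem:bijection} together with Proposition~\ref{prop:onetoone} to upgrade $\phi$ to a full bijection, and then recover the formulas of Theorem~\ref{theorem:closed} by directly counting the coin-flip side. First I would observe that $|\Bn| = 2^\ell = |\Cn|$, since arrival sequences are arbitrary length-$\ell$ binary strings and coin-flip sequences are arbitrary length-$\ell$ strings over $\{H,T\}$. Because $\phi$ is injective by Proposition~\ref{prop:onetoone} and maps between finite sets of equal cardinality, it is automatically a bijection.

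Next, I would restrict $\phi$ to the subset of arrival sequences of length $\ell = n+m$ whose merging path ends at $(n,m)$, i.e.\ those with $\rr(\bb) = m$. By Theorem~\ref{theorem:bijection}, $\phi$ sends each such $\bb$ to some $\cc \in \Cn$ with $\M(\cc) = m$. Since $\phi$ is a bijection on the ambient sets, applying the identity $\rr(\bb) = \M(\phi(\bb))$ to $\phi^{-1}$ shows that every $\cc$ with $\M(\cc) = m$ arises from some $\bb$ with $\rr(\bb) = m$. Thus the restriction of $\phi$ is a bijection between $\{\bb \in \Bn : \rr(\bb) = m\}$, whose cardinality is $M_n(m)$, and $\{\cc \in \Cn : \M(\cc) = m\}$.

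Finally, counting length-$\ell$ coin-flip sequences with $\M(\cc) = m$ is elementary and splits into two cases. When $m > n$, the condition $\max(h(\cc), t(\cc)) = m$ forces the multiset $\{h(\cc), t(\cc)\}$ to equal $\{m,n\}$, with either $m$ heads and $n$ tails or $n$ heads and $m$ tails; these cases are disjoint and each contributes $\binom{n+m}{n}$ sequences, giving $2\binom{n+m}{n}$ in total. When $m = n$, the conditions $h(\cc) + t(\cc) = 2n$ and $\max(h(\cc), t(\cc)) = n$ force $h(\cc) = t(\cc) = n$, yielding $\binom{2n}{n}$ sequences. Equating these counts with $M_n(m)$ recovers the two closed formulas of Theorem~\ref{theorem:closed} combinatorially. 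There is no substantive obstacle here, since the analytic work already resides in Theorem~\ref{theorem:bijection}; the only care needed is to separate the strict inequality $m>n$ from the diagonal case $m=n$ on the coin-flip side so that the two families of sequences are not double-counted.
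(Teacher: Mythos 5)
Your proof is correct, and it uses the same ingredients as the paper's: Proposition~\ref{prop:onetoone} for injectivity, Theorem~\ref{theorem:bijection} for preservation of the statistic, and a direct count of coin-flip sequences with a prescribed maximum. The one genuine difference is how bijectivity is obtained. The paper deduces that $\phi$ is a bijection by observing that the count of sequences $\cc$ with $\M(\cc)=m$ matches the formulas of Theorem~\ref{theorem:closed} (which it had already proved via the recurrence in Lemma~\ref{lemma:recurrence}), and then declares that $\phi$ yields a combinatorial proof of that theorem. You instead get bijectivity for free from injectivity together with $|\Bn|=|\Cn|=2^\ell$, restrict to fibers using $\rr(\bb)=\M(\phi(\bb))$ in both directions, and only then count the coin-flip side to \emph{derive} the closed formulas. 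Your ordering is preferable: it makes the combinatorial derivation of Theorem~\ref{theorem:closed} logically independent of the earlier recurrence argument, which is what the promised ``combinatorial proof'' requires, whereas the paper's literal phrasing leans on Theorem~\ref{theorem:closed} to establish the very bijection that is supposed to reprove it. Your explicit verification that every $\cc$ with $\M(\cc)=m$ pulls back to some $\bb$ with $\rr(\bb)=m$, and your separation of the cases $m>n$ and $m=n$ to avoid double-counting, are details the paper leaves implicit but that your write-up handles correctly.
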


\begin{proof}
Let $m=\M(\cc)$, where $\cc\in C_{\ell}$, and let $n=\ell-m$. Then the number of coin flips with max $m$ of heads or tails is $2\binom{m+n}{m}$, when $m>n$, and $\binom{2n}{n}$ when $m=n$. Since these match the formulas given in Theorem \ref{theorem:closed} and $\phi$ is one-to-one by Proposition \ref{prop:onetoone}, $\phi$ must be a bijection. Thus, $\phi$ provides a combinatorial proof of Theorem \ref{theorem:closed}.
\end{proof}

\section{Merging Paths with exactly $k$ red cars}\label{section:kredcars}

We now analyze Question~\ref{question:2}: what is the expected length of the right lane when we consider all possible arrival sequences with exactly $k$ red cars? The following table gives the numbers $R_{\ell,k}$, which is the total number of cars in the right lane when summed over all binary sequences of length $\ell$ and exactly $k$ zeros.

 \begin{table}[h!]
\begin{tabular}{l}
$%
\begin{tabular}{l||llllllllll}
$\ell$ & 4 & 33 & 120 & 253 & 344 & 309 & 176 & 57 & 8 &  \\ 
$7$ & 4 & 28 & 85 & 147 & 162 & 112 & 43 & 7 &  &  \\ 
$6$ & 3 & 19 & 51 & 76 & 66 & 31 & 6 &  &  &  \\ 
$5$ & 3 & 15 & 31 & 35 & 21 & 5 &  &  &  &  \\ 
$4$ & 2 & 9 & 16 & 13 & 4 &  &  &  &  &  \\ 
$3$ & 2 & 6 & 7 & 3 &  &  &  &  &  &  \\ 
$2$ & 1 & 3 & 2 &  &  &  &  &  &  &  \\ 
$1$ & 1 & 1 &  &  &  &  &  &  &  &  \\ 
$0$ & 0 &  &  &  &  &  &  &  &  &  \\ \hline\hline
& $0$ & $1$ & $2$ & $3$ & $4$ & $5$ & $6$ & $7$ & $8$ & $k$%
\end{tabular}%
$ \\ 
\end{tabular}%
\caption{Sum of right lane lengths for all merging paths of length $\ell$ with $k$ zeros} \label{table:Rlk}
\end{table}

To calculate the above numbers, we consider merging paths with exactly $k$ red cars. Let $\Wmnk$ denote the set of all merging paths reaching the point $(n,m)$ with exactly $k$ zeros, and $M_{n,k}(m)=|\Wmnk|$ be the number of such merging paths.

Table~\ref{table:mnk} counts these paths for small values of $m$, $n$, and $k$, and the following lemma describes some recursive formulas for the values $M_{n,k}(m)$.  The proof is straightforward and is essentially the same as that of  Lemma~\ref{lemma:recurrence} so we omit it.

\begin{lemma} \label{prop:mnk_recursions}
The values $M_{n,k}(m)$ satisfy the following recursive formulas:
\begin{itemize}
\item $M_{n,k}(m)=M_{n-1,k}(m)+M_{n,k-1}(m-1)$ for $m>n+1$, $n>0$,  \\
\item $M_{n,k}(m)=M_{n-1,k}(m)+M_{n,k-1}(m-1)+M_{n,k}(m-1)$ for $m=n+1$, $n>0$, \\
\item $M_{n,k}(n)=M_{n-1,k}(n)$ for $n>0$, and  \\
\item $M_{0,k}(m)=1$ when $m=k$ or $k+1$, and $k>1$; and when $k=0$ and $m=1$.\\
\item Otherwise, $M_{0,k}(m)=0$ .
\end{itemize}
\end{lemma}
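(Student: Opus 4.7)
\emph{Proof proposal.} My plan is to follow the approach of Lemma~\ref{lemma:recurrence} almost verbatim, with the additional parameter $k$ tracked passively through the argument. I will set up a double induction, with the outer induction on $n$ and the inner induction on $m \geq n$, and in each inductive step I will condition on the last step of a merging path reaching $(n,m)$.

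For the base cases $n = 0$, a merging path from $(0,0)$ to $(0,m)$ consists entirely of up steps along the $y$-axis. A bounce requires the lanes to have equal length, and along the vertical axis this happens only at the starting point $(0,0)$. Hence at most one of the up steps can be a bounce, and it must be the very first step. A straightforward enumeration then verifies the stated values of $M_{0,k}(m)$: the unique configurations are "$k$ reds only" when $m=k$, "a single bounce followed by $k$ reds" when $m=k+1$, and the empty/singleton cases recorded separately.

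For the inductive step I fix $n>0$ and classify merging paths ending at $(n,m)$ by their final step. There are three possibilities: (a) a right step (a green car choosing the left lane), whose deletion leaves a merging path to $(n-1,m)$ with the same $k$, contributing $M_{n-1,k}(m)$; (b) an ordinary up step (a red car), whose deletion leaves a merging path to $(n,m-1)$ with $k-1$ zeros, contributing $M_{n,k-1}(m-1)$; and (c) a bounce up step (a green car forced to the right because the two lanes were equal), whose deletion leaves a merging path ending on the diagonal. Combining these gives the three recurrences: when $m>n+1$ the predecessor $(n,m-1)$ is strictly above the diagonal so case (c) is impossible, yielding $M_{n,k}(m)=M_{n-1,k}(m)+M_{n,k-1}(m-1)$; when $m=n+1$ the predecessor $(n,n)$ is on the diagonal and case (c) contributes $M_{n,k}(m-1)$, giving the second recurrence; and when $m=n$ a final up step would require arriving at $(n,n)$ from $(n,n-1)$, which lies strictly below the diagonal and is forbidden for a ballot path, so only case (a) survives, giving $M_{n,k}(n)=M_{n-1,k}(n)$.

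The main obstacle, as in Lemma~\ref{lemma:recurrence}, is correctly bookkeeping when the bounce term can occur as the final step. Bounces happen precisely when a green car meets equal lanes, which in lattice-path language is a bounce off the diagonal; this forces the predecessor to satisfy $m-1=n$. This observation is exactly what separates the three regimes of the recurrence. Once this case analysis is in place, the remainder of the verification is routine and completely parallel to the colorless case already treated.
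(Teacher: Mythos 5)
Your proof is correct and follows exactly the route the paper intends: the paper omits this proof, stating it is "essentially the same as that of Lemma~\ref{lemma:recurrence}," and your classification of paths by their final step (right step, ordinary up step, bounce forced by a diagonal predecessor) together with the enumeration of the $n=0$ column is precisely that argument with the parameter $k$ carried along. The only caveat is in the base case, where the statement's condition "$k>1$" appears to be a typo for "$k\geq 1$" (Table~\ref{table:mnk} shows $M_{0,1}(1)=M_{0,1}(2)=1$), and your enumeration correctly yields the $k\geq 1$ version.
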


\begin{table}[h]
\begin{tabular}{l||llllll||llllll||llllll||llllll}
$k$ & $0$ &  &  &  & &  & $1$ &  &  &  &  &  &$2$ &  &  &   &  &  & $3$ &  &  &  &  &   \\ \hline
$m$ & 0 & 0 & 0 & 0 & 0 & 1 & 0 & 0 & 0 & 0 & 1 & 11 & 0 & 0 & 0 & 1 & 10 & 54 & 0 & 0 & 1 & 9 & 44 & 154  \\ 
$5$ & 0 & 0 & 0 & 0 & 1 & 1 & 0 & 0 & 0 & 1 & 9 & 9  & 0 & 0 & 1 & 8 & 35 & 35 & 0 & 1 & 7 & \textcolor{green}{27} & \textcolor{green}{75} & 75    \\ 
$4$ & 0 & 0 & 0 & 1 & 1 &   & 0 & 0 & 1 & 7 & 7 &    & 0 & \textcolor{blue}{1} & \textcolor{blue}{6} & 20 & \textcolor{green}{20}&    & 1 & 5 & 14 & 28 & \textcolor{green}{28} &  \\ 
$3$ & 0 & 0 & 1 & 1 &   &   & 0 & 1 & \textcolor{blue}{5} & 5 &   &    & 1 & 4 & 9 & 9 &   &     & 1 & 3 & 5 & 5 &   & \\ 
$2$ & 0 & 1 & 1 &   &   &   & 1 & 3 & 3 &   &   &    & 1 & 2 & 2 &   &   &     & 0 & 0 & 0 &   &   & \\ 
$1$ & 1 & 1 &   &   &   &   & 1 & 1 &   &   &   &    & 0 & 0 &   &   &   &     & 0 & 0 &   &   &   &  \\ 
$0$ & 0 &   &   &   &   &   & 0 &   &   &   &   &    & 0 &   &   &   &   &     & 0 &   &   &   &   &      \\ \hline\hline
 $n$ &    $0$&$1$&$2$&$3$&$4$&$5$&$0$&$1$&$2$&$3$&$4$&$5$ &$0$&$1$&$2$&$3$&$4$&$5$&$0$&$1$&$2$&$3$&$4$&$5$%
\end{tabular}
\caption{Values of $M_{n,k}(m)$ for small values of $m,n,$ and $k$. } \label{table:mnk}
\end{table}

Let $\Tlbk$ denote the set of all arrival sequences that have length $\ell$, contain exactly $k$ zeros, and contain at least $b$ bounces. The following proposition gives the connections between $\Wmnk$ and $\Tlbk$.

\begin{proposition}\label{prop:WandT}
$$\Tmnk = \bigcup_{i=0}^n  \mathbf{W}_{n-i,m+i,k} \quad \text{ and } \quad |\Tmnk| = \sum_{i=0}^n  M_{n-i,k}(m+i).$$

$$\Wmnk = \Tmnk-\mathbf{T}_{m+n,m-k+1,k}$$
\end{proposition}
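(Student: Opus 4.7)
The plan is to show both identities by partitioning $\Tmnk$ according to the \emph{exact} number of bounces in each sequence.

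The key bookkeeping observation to establish first is that a merging path of length $\ell$ with exactly $k$ zeros and exactly $b$ bounces ends at the point $(\ell-k-b,\, k+b)$. This is because the right lane is filled by the $k$ zeros together with the $b$ bounced ones, while the remaining $\ell-k-b$ ones step to the right (i.e.\ enter the left lane). In particular, a sequence in $\mathbf{W}_{n,m,k}$ has length $m+n$ and exactly $m-k$ bounces, and conversely any arrival sequence of length $m+n$ with $k$ zeros and exactly $m-k+i$ bounces (for some $i\ge 0$) must be a path ending at $(n-i,m+i)$, hence an element of $\mathbf{W}_{n-i,m+i,k}$.

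For the first identity, I would take any $\bb\in\Tmnk=\mathbf{T}_{m+n,\,m-k,\,k}$ and write its exact number of bounces as $b=m-k+i$ with $i\ge 0$. By the observation above, $\bb\in\mathbf{W}_{n-i,m+i,k}$, and since $n-i\ge 0$ we need $i\le n$, giving the containment $\Tmnk\subseteq\bigcup_{i=0}^{n}\mathbf{W}_{n-i,m+i,k}$. The reverse containment is equally immediate: for each $0\le i\le n$, any element of $\mathbf{W}_{n-i,m+i,k}$ has length $m+n$, exactly $k$ zeros, and exactly $m-k+i\ge m-k$ bounces, hence lies in $\Tmnk$. Moreover the union is disjoint, because distinct values of $i$ force distinct exact bounce counts (equivalently, distinct endpoints). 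Passing to cardinalities and using $M_{n-i,k}(m+i)=|\mathbf{W}_{n-i,m+i,k}|$ yields the claimed sum.

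For the second identity, the same framework makes things essentially immediate. By definition $\mathbf{T}_{m+n,\,m-k+1,\,k}\subseteq \mathbf{T}_{m+n,\,m-k,\,k}=\Tmnk$, so the set difference is well-defined, and it consists of precisely those sequences of length $m+n$ with $k$ zeros having at least $m-k$ bounces but not at least $m-k+1$ bounces, i.e.\ exactly $m-k$ bounces. By the initial observation, this is precisely $\Wmnk$.

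There are no real obstacles here; the argument is purely a matter of rewriting ``at least $m-k$ bounces'' as ``exactly $m-k+i$ bounces for some $i\ge 0$'' and checking that the exact-bounces parameter is in bijection with the endpoint of the corresponding lattice path. The one point that requires a small sanity check is the index range $0\le i\le n$, which follows from requiring the right-lane length $m+i$ to be at most the total length $m+n$ (equivalently, the left-lane coordinate $n-i$ to be nonnegative); some of the sets $\mathbf{W}_{n-i,m+i,k}$ appearing in the union may well be empty, but this causes no harm.
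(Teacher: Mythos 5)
Your proof is correct and follows essentially the same route as the paper: both rest on the observation that the height of the endpoint equals the number of zeros plus the number of bounces, so that fixing the length and the number of zeros puts exact bounce counts in bijection with endpoints, and ``at least $m-k$ bounces'' becomes ``ends weakly northwest of $(n,m)$.'' Your write-up is somewhat more explicit than the paper's (you verify both containments, disjointness, and the index range $0\le i\le n$), but there is no substantive difference in method.
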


\begin{proof}
For the first formula, since each sequence in $\Tmnk$ has at least $m-k$ bounces and exactly $k$ zeros, each merging path takes at least $m$ steps up. Thus each such path ends weakly northwest of $(n,m)$. 
The second formula follows since the number of merging paths with $k$ zeros that end up weakly northwest of $(n-1,m+1)$ is \[\mathbf{T}_{m+1+n-1,m+1-k,k}=\mathbf{T}_{m+n,m-k+1,k}. \qedhere\]
\end{proof}

Let $B_{\ell,k}$ denote the set of binary sequences of length $\ell$ with exactly $k$ zeros.  The following lemma gives bounds on the number of bounces in a merging path with exactly $k$ zeros.

\begin{lemma} \label{lemma:bounces}
Let $\bb \in B_{\ell,k}$ be a binary string of length $\ell$ with $k$ zeros, and whose merging path contains $b$ bounces. Then
\[
\dfrac{\ell-2k}{2}\leq b\leq \dfrac{\ell-k+1}{2}
\]
\end{lemma}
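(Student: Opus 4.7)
The plan is to relate the number of bounces $b$ to the lengths of the right and left lanes, and then extract the two inequalities from two natural geometric features of merging paths: such paths stay weakly above the diagonal, and successive bounces must be separated by returns to the diagonal.

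First, I would establish the bookkeeping identity linking $b$ to the endpoint of the merging path. Each bounce corresponds to a green car ($1$) that ended up in the right lane. Therefore the right lane contains exactly the $k$ red cars together with the $b$ bounced green cars, giving right-lane length $k+b$, while the left lane contains the remaining non-bounced green cars and has length $(\ell-k)-b$. Equivalently, the merging path of $\bb$ ends at the lattice point $(n,m)=(\ell-k-b,\, k+b)$.

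For the lower bound, I would use the fact that a green car chooses the left lane only when it is strictly shorter than the right lane; hence the right lane is always at least as long as the left lane at every stage. In lattice-path terms, this says the merging path stays weakly above the diagonal $y=x$, so its endpoint satisfies $m\geq n$. Substituting the identities above, $k+b \geq \ell-k-b$, which rearranges to $b \geq \tfrac{\ell-2k}{2}$.

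For the upper bound, I would analyze what must happen between two consecutive bounces. A bounce takes place when the path is on the diagonal at some point $(a,a)$ and the next symbol is a $1$, moving it to $(a, a+1)$. To reach the next bounce, the path must first return to the diagonal at some later point $(a', a')$ with $a' > a$, which requires exactly $a' - a$ right steps and $a' - a - 1$ up steps; in particular it must contain at least one right step, i.e., at least one non-bounced $1$. Consequently, $b$ bounces need at least $b-1$ non-bounced $1$s to connect them. Since the total number of $1$s in $\bb$ is $\ell - k$ and these split as $b$ bounced and $\ell-k-b$ non-bounced, this gives $\ell-k-b \geq b-1$, hence $b \leq \tfrac{\ell-k+1}{2}$.

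The main subtlety, rather than a true obstacle, is being precise about the endpoints of the bounce sequence: the first bounce does not require a preceding right step (it can occur at the origin), and the last bounce does not require a subsequent right step (the path may simply end above the diagonal). This is exactly why the counting gives $b-1$ connecting right steps rather than $b$ or $b+1$, and produces the stated bound $\tfrac{\ell-k+1}{2}$ rather than something tighter or looser.
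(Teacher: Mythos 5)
Your proposal is correct and follows essentially the same route as the paper: the identity $(n,m)=(\ell-k-b,\,k+b)$ together with $m\geq n$ for the lower bound, and the observation that every bounce after the first must be preceded by a return to the diagonal (hence a non-bounced $1$) for the upper bound, giving $o(\bb)\geq 2b-1$. The only difference is cosmetic: the paper phrases the upper-bound count as ``each non-initial bounce is immediately preceded by a $1$,'' while you count at least one connecting right step between consecutive bounces; both yield $b\leq\frac{\ell-k+1}{2}$.
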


\begin{proof}
Suppose the merging path reaches the point $(n,m)$. Then $m=k+b$ and $n=\ell-k-b$. The lower bound results from the fact that $m\geq n$. For the upper bound, we note that, with the exception of an initial bounce, each bounce is preceded by a 1. Thus

\begin{equation*}
b\leq\dfrac{o(\bb)+1}{2}=\dfrac{l-k+1}{2}. \qedhere    
\end{equation*}
\end{proof}

Now we are ready to prove the connection between the sets $\Tmnk$ and $B_{m+n,n-(m-k)+1}$ through an argument similar to Andre's Reflection Method \cite{A87}.  Given an arrival sequence $\bb$, let $\overline{\bb}$ denote the sequence satisfying $\overline{b_i} = (b_i+1) \mod 2$ for all $1 \leq i \leq m+n$. If $m>k$, we can write each arrival sequence $\bb \in \Tmnk$ as $\bb=b_1b_2$ where the last element of $b_1$ is where the $(m-k)$th bounce off the diagonal occurs in the corresponding merging path. We can define a map $\psi: \Tmnk \rightarrow B_{m+n,n-(m-k)+1}$ that sends each $\bb=b_1b_2$ to $\bb=b_1 \overline{b_2}$. The following Lemma shows that $\psi$ is a bijection, and gives us a formula for the numbers $\Tmnk$.

\begin{lemma} \label{lemma:psi_bij}
If $m>k$ and $m>n$, then the map $\psi: \Tmnk \rightarrow B_{m+n,n-(m-k)+1}$ that sends each $\bb=b_1b_2$ to $\bb'=b_1 \overline{b_2}$ is a bijection. Hence  \[|\Tmnk|= \dbinom{m+n}{n-(m-k)+1}.\]
\end{lemma}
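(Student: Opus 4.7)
The plan is to construct an explicit inverse to $\psi$. Given $\bb' \in B_{m+n,n-(m-k)+1}$, split it as $\bb' = b_1 b_2'$ at the position immediately after the $(m-k)$th bounce of its merging path, and set $\psi^{-1}(\bb') := b_1\,\overline{b_2'}$. Verifying that $\psi$ and $\psi^{-1}$ are mutual inverses then reduces to (i) a zero-count check ensuring that $\psi$ lands in $B_{m+n,n-(m-k)+1}$ and $\psi^{-1}$ lands in $\Tmnk$, and (ii) showing that every $\bb' \in B_{m+n,n-(m-k)+1}$ already has at least $m-k$ bounces in its merging path, which is what makes the splitting well-defined.

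For (i), I would use the observation that the $(m-k)$th bounce of $\bb \in \Tmnk$ lands on a lattice point of the form $(n_1, n_1+1)$ just above the diagonal. Reading off coordinates, the $n_1+1$ up-steps in $b_1$ split into $z_1 = n_1+1-(m-k)$ zeros and $m-k$ bounces, while the $n_1$ right-steps together with the $m-k$ bouncing ones give $o_1 = n_1+(m-k)$ ones in $b_1$. The remainder $b_2$ then has $k-z_1$ zeros and $n-n_1$ ones, so $\psi(\bb)$ has $z_1 + (n-n_1) = n-(m-k)+1$ zeros, and the parallel computation shows $\psi^{-1}(\bb')$ has exactly $k$ zeros. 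Injectivity of $\psi$ and $\psi^{-1}\circ\psi = \mathrm{id}$ follow immediately, since the merging path of $\psi(\bb)$ agrees with that of $\bb$ throughout the first $|b_1|$ positions, so its $(m-k)$th bounce is still located at position $|b_1|$; hence the recovered split returns the original $b_1$, and $\overline{\overline{b_2}} = b_2$.

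The main obstacle is (ii). Let $b'$ be the number of bounces in the merging path of $\bb'$, so its endpoint is $(o'-b',\,z'+b')$ with $z' = n-(m-k)+1$ and $o' = 2m-k-1$. Since a merging path never crosses strictly below the diagonal, $z' + b' \geq o' - b'$, giving $2b' \geq 3m - n - 2k - 2$. When $m \geq n+2$ this rearranges directly to $b' \geq m-k$; when $m = n+1$ the right-hand side equals $2(m-k)-1$, which is odd, so integrality of $b'$ forces $2b' \geq 2(m-k)$, hence $b' \geq m-k$ again. This is precisely where the hypothesis $m > n$ is used. With (ii) in hand the reverse composition $\psi\circ\psi^{-1} = \mathrm{id}$ follows symmetrically, and the cardinality formula $|\Tmnk| = \binom{m+n}{n-(m-k)+1}$ is immediate because the target is by definition the set of all binary strings of length $m+n$ with $n-(m-k)+1$ zeros.
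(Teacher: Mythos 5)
Your proposal is correct and follows essentially the same route as the paper's proof: the same split of $\bb$ at the $(m-k)$th bounce, the same zero-count computation showing $\psi$ lands in $B_{m+n,n-(m-k)+1}$ and the reverse map lands in $\Tmnk$, and recovery of the split point from the image to invert the map. The only substantive difference is that you explicitly verify the step the paper leaves as an exercise for the reader --- that every string in $B_{m+n,n-(m-k)+1}$ has at least $m-k$ bounces when $m>n$ --- including the integrality argument needed in the boundary case $m=n+1$.
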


\begin{proof}
We start by showing that $\psi$ is well-defined; that is, $\psi$ sends each arrival sequence $\bb = b_1b_2$ in $\Tmnk$ to an arrival sequence $\bb'=b_1\overline{b_2}$ in $B_{m+n,n-(m-k)+1}$.  Let $\bb=b_1b_2 \in \Tmnk$ be an arrival sequence of length $m+n$, containing exactly $k$ zeros, and whose merging path contains at least $m-k$ bounces. Suppose $b_1$ contains $j$ zeros for some $0 \leq j \leq k$. In order for the last entry in $b_1$ to be the $(m-k)$th bounce in $b_1$, it must be the case that $b_1$ contains $2(m-k)-1+j$ ones. 
We calculate the number of ones in $b_2$ as
\begin{align*}
  o(b_2) & =  m+n-o(b_1)-z(\bb) \\ & = m+n-(2(m-k)-1+j)-k  \\
                             & = n-m+k+1-j.
\end{align*}
Now we calculate that the number of zeros in $\bb'=\psi(\bb)$ is $$z(\psi(\bb)) = z(b_1 \overline{b_2})  = z(b_1)+o(b_2) = j+(n-m+k+1-j) = n-(m-k)+1.$$
Hence $\psi$ sends each $\bb=b_1b_2 \in \Tmnk$ to a sequence $\bb' =b_1\overline{b_2}$ with exactly $n-(m-k)+1 $ zeros in $B_{m+n,n-(m-k)+1}$.

Next we show that $\psi$ is injective.  Suppose that $\bb^1$ and $\bb^2$ are distinct arrival sequences in $\Tmnk$.  Then either $b^1_1 \neq b^2_1$ or 
$b^1_2 \neq b^2_2$. In either case, we see that $\bb^1 \neq \bb^2$ implies that $$ \psi(\bb^1) = b^1_1\overline{b^1_2} \neq b^2_1\overline{b^2_2} =\psi(\bb^2). $$

We now argue that $\psi:\Tmnk\rightarrow B_{m+n,n-(m-k)+1}$ is surjective.
Let $\bb \in B_{m+n,n-(m-k)+1}$ be an arrival sequence with $n-(m-k)+1$ zeros. Consider the associated merging path and write $\bb=b_1b_2$ where the $(m-k)$th bounce is the last entry in $b_1$. Lemma~\ref{lemma:bounces} guarantees that the merging path for $\bb$ will have at least $m-k$ bounces when $m>n$, (we leave this as an exercise for the reader). Then $b_1 \overline{b_2}$ is an arrival sequence in $\Tmnk$ as it contains at least $m-k$ bounces, and we calculate that it has exactly $k$ zeros as follows:
\begin{itemize}
    \item $z(b_1)=j$
    \item $o(b_1)=2(m-k)-1+j$
    \item $o(b_2) = m+n-o(b_1)-z(\bb) = m+n-(2(m-k)-1+j)-(n-(m-k)+1) = k-j $
    \item $z(b_1 \overline{b_2}) = z(b_1)+o(b_2) = j+(k-j) =k$
\end{itemize}
Hence $\psi(b_1 \overline{b_2})=\bb$ and so $\psi$ is a surjective map. Therefore, $\psi$ is a bijection.
\end{proof}

The following theorem follows from Lemma \ref{lemma:psi_bij} and Proposition \ref{prop:WandT} and gives us one of the formulas for the merging paths $M_{n,k}(m)$.

\begin{theorem} \label{prop:general_M_mnk}
If $m > k$ and $m> n$, then
\[
M_{n,k}(m)=\dbinom{m+n}{n-(m-k)+1}-\dbinom{m+n}{n-(m-k)-1}.
\]
\end{theorem}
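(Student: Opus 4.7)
The proof is a direct assembly of Proposition~\ref{prop:WandT} and Lemma~\ref{lemma:psi_bij}. The first step is to invoke Proposition~\ref{prop:WandT} to write
\[
M_{n,k}(m) \;=\; |\Wmnk| \;=\; |\Tmnk| \;-\; |\mathbf{T}_{m+n,\,m-k+1,\,k}|,
\]
so that the task reduces to evaluating each cardinality on the right as a single binomial coefficient using Lemma~\ref{lemma:psi_bij}.

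For the first term, Lemma~\ref{lemma:psi_bij} applies immediately under the standing hypotheses $m>k$ and $m>n$, giving $|\Tmnk| = \binom{m+n}{n-(m-k)+1}$, which is exactly the first binomial coefficient in the target formula. For the second term I would reindex: write $\mathbf{T}_{m+n,\,m-k+1,\,k} = \mathbf{T}_{m'+n',\,m'-k',\,k'}$ with $m' = m+1$, $n' = n-1$, and $k' = k$. I would then check that the hypotheses of Lemma~\ref{lemma:psi_bij} are preserved under this shift: $m'>k'$ becomes $m+1>k$, which follows from $m>k$; and $m'>n'$ becomes $m+1>n-1$, which follows from $m>n$. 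The lemma then yields
\[
|\mathbf{T}_{m+n,\,m-k+1,\,k}| \;=\; \binom{m'+n'}{n'-(m'-k')+1} \;=\; \binom{m+n}{(n-1)-(m-k+1)+1} \;=\; \binom{m+n}{n-(m-k)-1},
\]
matching the second binomial coefficient in the statement. Subtracting the two evaluations produces the claimed identity.

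The argument has no real obstacle; the only item that demands care is the reindexing and the verification that the strict inequalities of Lemma~\ref{lemma:psi_bij} survive the shift $m \mapsto m+1$, $n \mapsto n-1$. The potentially degenerate edge case $n=0$ causes no trouble either, since the second binomial coefficient collapses to $\binom{m}{-1} = 0$, reflecting the combinatorial fact that a merging path landing on the $y$-axis cannot carry strictly more than $m-k$ bounces. In short, once Proposition~\ref{prop:WandT} converts the count $M_{n,k}(m)$ into a difference of two ``at least $b$ bounces'' counts, Lemma~\ref{lemma:psi_bij} supplies each of those counts in closed form, and the theorem follows from clean bookkeeping.
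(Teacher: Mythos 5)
Your proof is correct and follows exactly the route the paper intends: the paper states that Theorem~\ref{prop:general_M_mnk} ``follows from Lemma~\ref{lemma:psi_bij} and Proposition~\ref{prop:WandT}'' without writing out the details, and you have supplied precisely those details, including the reindexing $m\mapsto m+1$, $n\mapsto n-1$ needed to evaluate $|\mathbf{T}_{m+n,m-k+1,k}|$. The verification that the hypotheses of Lemma~\ref{lemma:psi_bij} survive the shift, and the remark on the degenerate case where the second binomial coefficient vanishes, are exactly the right points to check.
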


The previous results leave out merging paths that reach the diagonal where $m=n$, and merging paths where $m=k$. For the case when $m=n$, the corresponding set $\mathbf{T}_{2n,n-k,k}$ has size $\binom{2n}{k}$. The following corollary gives the formula for the merging paths reaching the diagonal $M_{n,k}(n)$.

\begin{corollary} \label{prop:diagonal}
If $n\geq k$, then
\[
M_{n,k}(n)=\dbinom{2n}{k}-\dbinom{2n}{k-1}.
\]
\end{corollary}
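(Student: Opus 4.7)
The plan is to derive the identity by specializing the set-difference formula in Proposition~\ref{prop:WandT} to $m = n$ and then invoking Lemma~\ref{lemma:psi_bij} on the resulting ``one extra bounce'' term. Specializing Proposition~\ref{prop:WandT} gives
\[
M_{n,k}(n) = |\mathbf{T}_{2n,\, n-k,\, k}| - |\mathbf{T}_{2n,\, n-k+1,\, k}|,
\]
so the problem reduces to evaluating these two cardinalities.

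For the first term, I would reuse the observation made in the paragraph preceding the corollary: the lower bound in Lemma~\ref{lemma:bounces} shows that every arrival sequence of length $2n$ with $k$ zeros automatically has at least $(2n - 2k)/2 = n - k$ bounces. Hence the "at least $n-k$ bounces" condition is vacuous, so $\mathbf{T}_{2n,n-k,k}$ coincides with $B_{2n,k}$ and has size $\binom{2n}{k}$.

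For the second term, I would apply Lemma~\ref{lemma:psi_bij} after renaming its variables so as to match the parameters of $\mathbf{T}_{2n,\,n-k+1,\,k}$. Setting $m' = n+1$ and $n' = n-1$ yields $m'+n' = 2n$ and $m'-k = n-k+1$, as required, and the hypotheses $m' > k$ and $m' > n'$ reduce to $n+1 > k$ (true because $n \geq k$) and $n+1 > n-1$ (immediate). The formula in Lemma~\ref{lemma:psi_bij} then gives
\[
|\mathbf{T}_{2n,\,n-k+1,\,k}| = \binom{m'+n'}{n' - (m'-k) + 1} = \binom{2n}{(n-1) - (n+1-k) + 1} = \binom{2n}{k-1},
\]
and subtracting produces the stated closed form. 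The entire computation is essentially bookkeeping; the only subtlety is matching variables carefully when invoking Lemma~\ref{lemma:psi_bij}, and checking that the edge case $k = 0$ is handled (there $\binom{2n}{-1} = 0$ by convention, consistent with the fact that more than $n$ bounces cannot occur when there are no red cars).
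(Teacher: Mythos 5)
Your proposal is correct and follows essentially the same route as the paper: specialize Proposition~\ref{prop:WandT} to $m=n$, observe that $|\mathbf{T}_{2n,n-k,k}|=\binom{2n}{k}$ because the bounce condition is automatic by Lemma~\ref{lemma:bounces}, and evaluate $|\mathbf{T}_{2n,n-k+1,k}|=\binom{2n}{k-1}$ via Lemma~\ref{lemma:psi_bij} with the parameter matching you describe. Your write-up is in fact more explicit than the paper's one-line proof about the variable substitution and the vacuousness of the bounce condition, but the argument is the same.
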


\begin{proof}
By Lemma \ref{prop:WandT} and Lemma \ref{lemma:psi_bij}, we have
\begin{equation*}
M_{n,k}(n)=\mathbf{W}_{n,n,k} = \mathbf{T}_{2n,n-k,k} - \mathbf{T}_{2n,n-k+1,k}=\dbinom{2n}{k}-\dbinom{2n}{k-1}. \qedhere
\end{equation*}
\end{proof}

The final case is when $m=k$ which corresponds to the set of merging paths with zero bounces. These paths are counted by the ballot numbers \cite{NS10}, so we record the formula for them as the following Theorem. The proof is the original Andre Reflection Method \cite{R08}.

\begin{theorem}\label{theorem:ballot}
\[
M_{n,k}(k)=\dfrac{k-n+1}{k+1}\dbinom{k+n}{n}=\dbinom{k+n}{n}-\dbinom{k+n}{n-1}.
\]
\end{theorem}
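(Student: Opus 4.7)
The plan is to recognize that the set of merging paths from $(0,0)$ to $(n,k)$ with exactly $k$ zeros and $m=k$ is precisely the set of merging paths with no bounces, since the number of ups in such a path equals the number of zeros. These are exactly the unrestricted lattice paths from $(0,0)$ to $(n,k)$ with unit east and north steps that never cross strictly below the diagonal $y=x$, i.e., classical ballot paths. This reduces the problem to a well-known ballot-number count, which I will establish via Andr\'e's Reflection Method as referenced in the excerpt.

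First, I would count the total number of (unrestricted) lattice paths from $(0,0)$ to $(n,k)$, which is $\binom{n+k}{n}$, since we must choose which of the $n+k$ steps are east. I would then count the \emph{bad} paths, namely those that at some point reach the line $y=x-1$. For each such bad path, let $P$ be the first lattice point on the path with $y=x-1$, and reflect the portion of the path from $(0,0)$ to $P$ across the line $y=x-1$. Since reflection across $y=x-1$ sends the point $(a,b)$ to $(b+1,a-1)$, this sends the starting point $(0,0)$ to $(1,-1)$, while leaving $P$ fixed. This gives a bijection between bad paths from $(0,0)$ to $(n,k)$ and all lattice paths from $(1,-1)$ to $(n,k)$; the latter use $n-1$ east steps and $k+1$ north steps, and so are counted by $\binom{n+k}{n-1}$.

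Subtracting, the number of good paths equals $\binom{n+k}{n}-\binom{n+k}{n-1}$, giving the second equality in the theorem. For the first equality, I would verify the purely algebraic identity
\[
\binom{k+n}{n}-\binom{k+n}{n-1}=\binom{k+n}{n}\left(1-\frac{n}{k+1}\right)=\frac{k-n+1}{k+1}\binom{k+n}{n},
\]
using the standard relation $\binom{k+n}{n-1}=\frac{n}{k+1}\binom{k+n}{n}$.

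There is no real obstacle here: the reduction to ballot paths follows immediately from the bounce-count bound of Lemma~\ref{lemma:bounces} (with $m=k$ forcing zero bounces), and the reflection argument is classical. The only mild care needed is to justify that the reflected segment from $(1,-1)$ to $P$ is a legitimate lattice path using only east and north steps, which is automatic because reflection across the anti-diagonal line $y=x-1$ preserves the set of unit east/north steps.
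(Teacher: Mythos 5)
Your proof is correct and is exactly the approach the paper intends: the paper simply states that the result follows from the original Andr\'e Reflection Method (citing the classical ballot-problem literature) without writing out the details, and your reflection across $y=x-1$ with the first-touch decomposition is precisely that argument, carried out in full. The only nitpick is terminological: $y=x-1$ is parallel to the main diagonal, not an ``anti-diagonal,'' though your observation that reflection across it swaps east and north steps (and hence preserves monotone lattice paths) is right.
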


\begin{table}[h]
\begin{tabular}{l}
$%
\begin{tabular}{l||llllllllllll}
$m$ & 0 & 0 & 0 & 0 & 1 & 16 & 135 & 798 & 3705 & 14364 & 48279 & \textcolor{green}{48279} \\ 
$10$ & 0 & 0 & 0 & 1 & 14 & 104 & 544 & 2244 & 7752 & 23256 & \textcolor{green}{23256}\\ 
$9$ & 0 & 0 & 1 & 12 & 77 & 350 & 1260 & 3808 & 9996 & \textcolor{green}{9996} \\ 
$8$ & 0 & 1 & 10 & 54 & 208 & 637 & 1638 & 3640 & \textcolor{green}{3640} &  \\ 
$7$ & 1 & 8 & 35 & 110 & 275 & 572 & 1001 & \textcolor{green}{1001} &  & \\ 
$6$ & \textcolor{blue}{1} & \textcolor{blue}{6} & \textcolor{blue}{20} & \textcolor{blue}{48} & \textcolor{blue}{90} & \textcolor{blue}{132} & \textcolor{blue}{132} &  &  &  \\ \hline\hline
& $0$ & $1$ & $2$ & $3$ & $4$ & $5$ & $6$ & $7$ & $8$ & $9$ & $10$ & $n$%
\end{tabular}%
$ \\ 
\end{tabular}%
\caption{Values of $M_{n,k}(m)$ for $k=6$ where the numbers along the diagonal in green are given by Corollary \ref{prop:diagonal}, the ballot numbers along the bottom in blue are given by Theorem \ref{theorem:ballot}, and the remaining numbers are given by Theorem \ref{prop:general_M_mnk}. }
\end{table}

Many of the values of $M_{n,k}(m)$ repeat periodically. The following lemma shows where that repetition occurs, and gives a bijective proof. (The formulas above would give a trivial proof of this result.)

\begin{lemma} \label{lemma:step}
If $m > k+1$ and $m>n>0$, then the number of merging paths from $(0,0)$ to $(n,m)$ with exactly $k$ zeros is equal to the number of paths from $(0,0)$ to $(n-1,m+1)$ with exactly $k+2$ zeros. In other words,
$$M_{n,k}(m)=M_{n-1,k+2}(m+1).$$
\end{lemma}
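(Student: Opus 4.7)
The plan is to construct an explicit bijection $\Phi \colon \mathbf{W}_{n,m,k} \to \mathbf{W}_{n-1,m+1,k+2}$ that flips two consecutive entries in each arrival sequence from $1$ to $0$. Given $\bb \in \mathbf{W}_{n,m,k}$, let $p$ denote the position of the \emph{last} bounce in its merging path; since $m > k+1$ forces at least two bounces, $p$ exists and $p \geq 2$. First I would show that $b_{p-1}$ must be a $1$ going to the left lane: since $r = l$ just before position $p$, a one-step backward analysis shows that $b_{p-1} = 0$ would force the invalid state $r < l$ just before $p-1$, and that $b_{p-1}$ being a bounce would force $r = l+1$ just before $p$---both contradict $b_p$ being a bounce. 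Setting $\Phi(\bb) = \bb'$ by flipping $b_{p-1}$ and $b_p$ from $1$ to $0$ thus gives a well-defined string with $k+2$ zeros.

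Next I would verify $\bb' \in \mathbf{W}_{n-1,m+1,k+2}$. The lane states of $\bb$ and $\bb'$ agree through position $p-2$. At positions $p-1$ and $p$, $\bb$ adds one car to the left lane and then one (the bounce) to the right, while $\bb'$ adds two cars to the right; so immediately after position $p$, the state $(l', r')$ of $\bb'$ differs from $(l, r)$ of $\bb$ by $(-1,+1)$. Since the remaining characters coincide and $r - l \geq 0$ along $\bb$'s path, we have $r' - l' \geq 2$ at every position past $p$. Consequently no new bounces arise in $\bb'$ after $p$, and $\bb'$ terminates at $(n-1, m+1)$ with exactly $m-k-1$ bounces and $k+2$ zeros.

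For the inverse $\Psi \colon \mathbf{W}_{n-1,m+1,k+2} \to \mathbf{W}_{n,m,k}$, given $\bb'$, I would let $s$ be the largest index $t$ for which the state after $t$ characters satisfies $r'(t) - l'(t) \leq 1$. Using $m > n$, the endpoint satisfies $r' - l' = m - n + 2 \geq 3$, so $s$ exists and $s \leq n + m - 2$. I would then show $b'_{s+1} = b'_{s+2} = 0$ automatically: at position $s+1$, the difference $r' - l'$ must jump from $1$ to $2$, which since $r' > l'$ forces label $0$; at position $s+2$, the state begins with $r' - l' = 2$ and must stay $\geq 2$ by maximality of $s$, again forcing label $0$. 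Flipping both back to $1$ makes position $s+2$ a new last bounce, producing $\Psi(\bb') \in \mathbf{W}_{n,m,k}$.

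The main obstacle will be verifying that $\Phi$ and $\Psi$ are mutual inverses. This reduces to the identification $p - 1 = s + 1$, where $p$ comes from $\Phi$ and $s$ from $\Psi$ applied to $\Phi(\bb)$. The key observation, tracked through the $(-1,+1)$ offset analysis, is that in $\Phi(\bb)$ the position $p-1$ is precisely the smallest index from which $r' - l'$ is pinned to be at least $2$ for the remainder of the sequence; this characterization is invariant under the two-bit flip, so $\Psi \circ \Phi$ and $\Phi \circ \Psi$ are both the identity, completing the bijective proof.
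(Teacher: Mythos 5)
Your proposal is correct and is essentially the paper's own argument: the paper also flips the two consecutive $1$s producing the last bounce (the first of which occurs one entry off the diagonal) into $0$s, and inverts by locating the last position where the path is at distance one from the diagonal, which is necessarily followed by two $0$s. You simply supply more of the bookkeeping (the $(-1,+1)$ offset and the identification $s+1=p-1$) that the paper leaves implicit.
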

\begin{proof}
Let $\bb\in\Wmnk$.  Since we assume that $m>k+1$, the number  of bounces in $\bb$ is $m-k>1$.  Hence there is at least one bounce not at the origin. The last of these bounces off the diagonal in $\bb$ comes from two consecutive 1s (the first of which occurs at one entry off the diagonal), changing those 1s to 0s creates a merging path $\bb' \in \mathbf{W}_{n-1,m+1,k+2}$ that ends at $(n-1,m+1)$ and has two more zeros than $\bb$. To reverse this process simply look for the last place a path $\bb'$ from $(0,0)$ to $(n-1,m+1)$ is distance one from the diagonal. There must follow two consecutive $0$s, so we replace those two entries with $1$s, giving us a path to $(n,m)$ with exactly $k$ zeros. 
\end{proof}

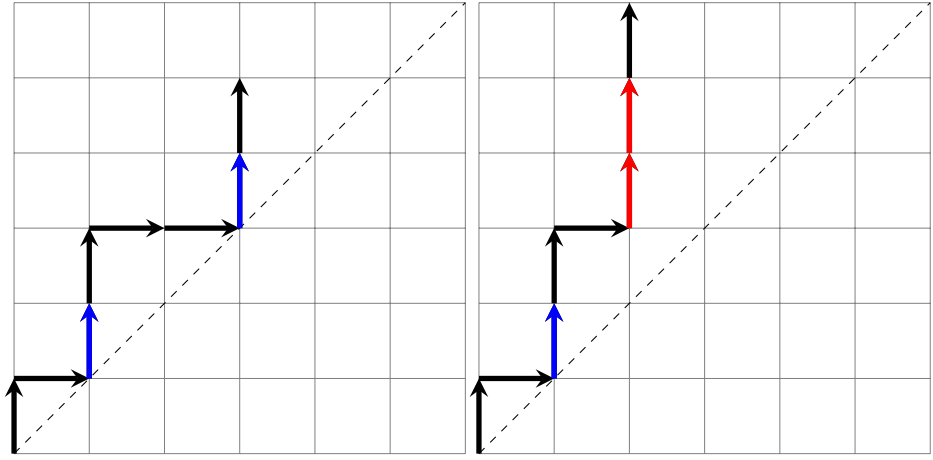
\begin{figure}[h!]
\begin{tikzpicture}
    \NEpath{0,0}{6}{6}{0,1,0,0,1,1,0,0};
    \draw[blue, line width=2pt,-stealth] (1,1) -- ++(0,1);
    \draw[blue, line width=2pt,-stealth] (3,3) -- ++(0,1);
    \end{tikzpicture} 
\begin{tikzpicture}
    \NEpath{0,0}{6}{6}{0,1,0,0,1,0,0,0};
    \draw[blue, line width=2pt,-stealth] (1,1) -- ++(0,1);
    \draw[red, line width=2pt,-stealth] (2,3) -- ++(0,1);
    \draw[red, line width=2pt,-stealth] (2,4) -- ++(0,1);
    \end{tikzpicture} 
\caption{A merging path 01\tcb{1}011\tcb{1}0 ending at (3,5) and a path 01\tcb{1}01\tcr{00}0 ending at (2,6).} \label{fig:latticepath4}
\end{figure}

Figure~\ref{fig:latticepath4} illustrates the bijection in Lemma~\ref{lemma:step}. It shows a path in $W_{3,3,5}$ and its corresponding path in $W_{2,5,7}$.

Using the formulas in Theorem \ref{prop:general_M_mnk}, Corollary \ref{prop:diagonal}, and Theorem \ref{theorem:ballot}, we can write down the formulas for the expected length of the right lane for $\ell$ cars with $k$ red cars. The formula breaks into 3 cases, as illustrated in the next theorem.

\begin{theorem}\label{theorem:expectedk}
The expected length $\mathbb{E}[\ell,k]$ of the right lane for $\ell$ cars with $k$ red cars is

\[
\mathbb{E}[\ell,k]=\left.\left[\dfrac{\ell+1}{2}\dbinom{\ell}{k}+\sum\limits_{i=0}^{k/2-1}\dbinom{\ell}{k-2i-2}\right]\middle/\dbinom{\ell}{k}\right.
\] for $\ell\geq 2k+1$ and $\ell$ odd,

\[
\mathbb{E}[\ell,k]=\left.\left[\dfrac{\ell}{2}\dbinom{\ell}{k}+\sum\limits_{i=0}^{(k-1)/2}\dbinom{\ell}{k-2i-1}\right]\middle/\dbinom{\ell}{k}\right.
\] for $\ell\geq 2k$ and $\ell$ even, and

\[
\mathbb{E}[\ell,k]=\left.\left[k\dbinom{\ell}{k}+\sum\limits_{i=0}^{(\ell-k-1)/2}\dbinom{\ell}{k+2i+1}\right]\middle/\dbinom{\ell}{k}\right.
\] for $\ell<2k$.
\end{theorem}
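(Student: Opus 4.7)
The plan is to compute $R(B_{\ell,k}) = \sum_{\bb \in B_{\ell,k}} r(\bb)$ directly by stratifying the arrival sequences according to their right-lane length. Since each arrival sequence in $B_{\ell,k}$ with right-lane length $m$ corresponds to a merging path ending at $(\ell-m,m)$ with exactly $k$ zeros, we have
\[
R(B_{\ell,k}) \;=\; \sum_{m} m\cdot M_{\ell-m,k}(m),
\qquad
\mathbb{E}[\ell,k] \;=\; R(B_{\ell,k})/\tbinom{\ell}{k}.
\]
By Lemma~\ref{lemma:bounces}, the admissible values of $m$ run from $\max(k,\lceil \ell/2\rceil)$ up through the largest $m$ with $m \le (\ell+k+1)/2$. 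The three cases of the theorem correspond to the three possible configurations of this range relative to the ``boundary'' values $m=\ell/2$ (the diagonal) and $m=k$ (the ballot case).

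For the bulk of the sum (i.e.\ $m>k$ with $m>\ell-m$) I would substitute the formula from Theorem~\ref{prop:general_M_mnk},
\[
M_{\ell-m,k}(m) \;=\; \tbinom{\ell}{\ell+k-2m+1}-\tbinom{\ell}{\ell+k-2m-1},
\]
and observe that if we set $v_m := \tbinom{\ell}{\ell+k-2m+1}$, then $M_{\ell-m,k}(m) = v_m - v_{m+1}$. Abel's summation by parts then yields
\[
\sum_{m\ge a} m(v_m - v_{m+1}) \;=\; a\,v_a \;+\; \sum_{m\ge a+1} v_m,
\]
the boundary term at infinity vanishing because the binomial coefficients $v_m$ are eventually zero. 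Re-indexing the tail sum by $j$ where $m=a+j$ and applying the symmetry $\binom{\ell}{i}=\binom{\ell}{\ell-i}$ will convert it into the stated form $\sum_i \binom{\ell}{k\pm(2i+1)}$ or $\sum_i\binom{\ell}{k-2i-2}$ that appears in the theorem.

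The three cases are then handled as follows. In Case~1 ($\ell$ odd, $\ell\ge 2k+1$), the minimum value is $m=(\ell+1)/2>k$, so the general formula applies to every term and a single Abel-summation starting at $a=(\ell+1)/2$ (where $v_a = \binom{\ell}{k}$) produces the first displayed formula. In Case~2 ($\ell$ even, $\ell\ge 2k$), the minimum $m=\ell/2$ is the diagonal case, whose count $\binom{\ell}{k}-\binom{\ell}{k-1}$ is supplied by Corollary~\ref{prop:diagonal}; I would peel this term off, apply Abel summation to the remainder starting at $a=\ell/2+1$ (where $v_a=\binom{\ell}{k-1}$), and combine. In Case~3 ($\ell<2k$), the minimum $m=k$ is the ballot case with count $\binom{\ell}{k}-\binom{\ell}{k+1}$ from Theorem~\ref{theorem:ballot}; peel it off, apply Abel summation starting at $a=k+1$ (where $v_a=\binom{\ell}{k+1}$), and combine. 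In each case the isolated boundary term cancels the ``$-\binom{\ell}{k\pm 1}$'' coming from the ballot/diagonal formula against the ``$+(a)\binom{\ell}{k\pm 1}$'' coming from Abel summation, leaving the clean leading term $\tfrac{\ell+1}{2}\binom{\ell}{k}$, $\tfrac{\ell}{2}\binom{\ell}{k}$, or $k\binom{\ell}{k}$, respectively.

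The main obstacle will be the bookkeeping around the boundary cases: keeping track of which formula ($M_{n,k}(m)$ for $m>n,m>k$; $M_{n,k}(n)$; or the ballot number $M_{n,k}(k)$) applies for each extremal $m$, and verifying that in Case~2 and Case~3 the boundary correction fits together with the Abel boundary term so that the coefficient of $\binom{\ell}{k}$ comes out to exactly $\ell/2$ and $k$ as stated. Once this bookkeeping is done, the remaining algebra amounts to re-indexing binomial sums and invoking the symmetry $\binom{\ell}{i}=\binom{\ell}{\ell-i}$, and dividing by $\binom{\ell}{k}$ yields the three advertised expressions for $\mathbb{E}[\ell,k]$.
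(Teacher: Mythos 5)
Your proposal is correct and follows essentially the same route as the paper: stratify $R(B_{\ell,k})=\sum_m m\,M_{\ell-m,k}(m)$ over the heights allowed by Lemma~\ref{lemma:bounces}, substitute the closed formulas from Theorem~\ref{prop:general_M_mnk} (together with Corollary~\ref{prop:diagonal} and Theorem~\ref{theorem:ballot} at the boundary), and collapse the sum by telescoping, which is exactly your Abel summation. The only difference is one of presentation: the paper writes out only the odd case $\ell\ge 2k+1$ and declares the others similar, whereas you carry the boundary bookkeeping through all three cases explicitly.
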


Note that if we let $k'=\ell-k$ in the last equation above, we get

\[
\mathbb{E}[\ell,k]=\left.\left[k\dbinom{\ell}{k}+\sum\limits_{i=0}^{(k'-1)/2}\dbinom{\ell}{k'-2i-1}\right]\middle/\dbinom{\ell}{k}\right.
\] for $\ell>2k'.$

\begin{proof}
We prove only the first case when $\ell\geq 2k+1$ and $\ell$ is odd, as the other cases are similar. When $\ell$ is odd, the minimum height of a merging path is $\frac{\ell+1}{2}$, and the maximum height by Lemma \ref{lemma:bounces} is $m=k+b=\frac{\ell+k+1}{2}$. So,

\begin{eqnarray}
\notag \dbinom{\ell}{k} \mathbb{E}[\ell,k]&=&\left[\sum\limits_{i=0}^{k/2}\left(\dfrac{\ell+1}{2}+i\right)M_{\frac{\ell-1}{2}-i,k}\left(\dfrac{\ell+1}{2}+i\right)\right] \\\notag
&=&\left[\sum\limits_{i=0}^{k/2}\left(\dfrac{\ell+1}{2}+i\right)\left(\dbinom{\ell}{k-2i}-\dbinom{\ell}{k-2i-2}\right)\right].
\end{eqnarray} The result follows as the sum telescopes.
\end{proof}

Our next goal is to state a corollary similar to Corollary~\ref{corollary:limit} when $\ell$ and $k$ get large. For this we let the ratio $k/\ell$ equal a fixed constant $b/a$ and consider the limit as $\ell$ and $k$ approach infinity. In the context of the merging problem, this is saying the percentage of red cars in a certain area is constant. Before stating this corollary, we need a helpful lemma first.

\begin{lemma}\label{lemma:helpful}
If $a\geq 2b$, then

\[
\lim\limits_{r\rightarrow\infty}\left.\sum\limits_{i=0}^{br}\dbinom{ar}{i}\middle/r\dbinom{ar}{br}\right.=0.
\]
\end{lemma}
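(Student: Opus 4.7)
The key observation is that the hypothesis $a\geq 2b$ forces $br\leq ar/2$, so on the range $0\leq i\leq br$ the binomial coefficients $\binom{ar}{i}$ are increasing and the largest term in the sum is $\binom{ar}{br}$. The plan is to bound the whole sum by a constant multiple of this largest term; once that is done, dividing by $r\binom{ar}{br}$ immediately yields an $O(1/r)$ bound that tends to $0$. I will split into the strict inequality case and the boundary case, which require different tools.

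When $a > 2b$ strictly, I will use the ratio of consecutive binomial coefficients. For $i\leq br$ one has
$$\frac{\binom{ar}{i-1}}{\binom{ar}{i}} \;=\; \frac{i}{ar-i+1} \;\leq\; \frac{br}{ar-br+1} \;\leq\; \frac{b}{a-b} \;=:\; \rho \;<\; 1.$$
Iterating gives $\binom{ar}{i}\leq \rho^{br-i}\binom{ar}{br}$, and summing the geometric series yields $\sum_{i=0}^{br}\binom{ar}{i}\leq \binom{ar}{br}/(1-\rho)$. Dividing by $r\binom{ar}{br}$ then bounds the quotient in the statement by $1/(r(1-\rho))$, which tends to $0$ as $r\to\infty$.

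The borderline case $a=2b$ is the main obstacle, since then $\rho=1$ and the geometric bound collapses. Here I will instead exploit the symmetry of the binomial row. Since $\binom{2br}{i}=\binom{2br}{2br-i}$ and $\sum_{i=0}^{2br}\binom{2br}{i}=4^{br}$, one obtains
$$\sum_{i=0}^{br}\binom{2br}{i} \;=\; \tfrac{1}{2}\!\left(4^{br}+\binom{2br}{br}\right).$$
Divided by $r\binom{2br}{br}$ this splits as $\frac{1}{2r} + \frac{4^{br}}{2r\binom{2br}{br}}$. The first term is obviously $o(1)$, and for the second I would invoke Stirling's asymptotic $\binom{2n}{n}\sim 4^n/\sqrt{\pi n}$ with $n=br$ (the same asymptotic used for Corollary~\ref{corollary:limit}), which gives $4^{br}/\binom{2br}{br}\sim \sqrt{\pi br}$ and hence a contribution of order $\sqrt{b/r}\to 0$.

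The hard part is precisely the equality case $a=2b$: the geometric decay in Case~1 just barely fails there, so the symmetry identity together with Stirling's formula for the central binomial coefficient is essential to push the argument through. Everything else reduces to routine comparison estimates.
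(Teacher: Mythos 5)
Your proposal is correct and follows essentially the same route as the paper's proof: for $a>2b$ both arguments bound the tail sum by a geometric series in the ratio of consecutive binomial coefficients (you use the $r$-independent ratio $b/(a-b)$ where the paper uses $br/(ar-br+1)$, a cosmetic difference), and for $a=2b$ both use the symmetry identity $\sum_{i=0}^{br}\binom{2br}{i}=\tfrac12\bigl(4^{br}+\binom{2br}{br}\bigr)$ together with Stirling's estimate for the central binomial coefficient. No gaps.
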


\begin{proof}
First, suppose $a>2b$, then

\begin{eqnarray}
\notag\left.\sum\limits_{i=0}^{br}\dbinom{ar}{i}\middle/\dbinom{ar}{br}\right.&=&1+\dfrac{br}{ar-br+1}+\dfrac{br(br-1)}{(ar-br+1)(ar-br+2)}+\cdots\\\notag
&\leq&1+\dfrac{br}{ar-br+1}+\left(\dfrac{br}{ar-br+1}\right)^2+\cdots\\\notag
&=&\dfrac{ar-br+1}{ar-2br+1}. \\\notag
\end{eqnarray}

Thus, 

\[
\lim\limits_{r\rightarrow\infty}\left.\sum\limits_{i=0}^{br}\dbinom{ar}{i}\middle/r\dbinom{ar}{br}\right.\leq\lim\limits_{r\rightarrow\infty}\dfrac{ar-br+1}{r(ar-2br+1)}=0.
\]

Now suppose that $a=2b$. The above limit is not 0 in this case, so we handle it separately as follows.
\begin{eqnarray}
\notag\left.\sum\limits_{i=0}^{br}\dbinom{2br}{i}\middle/\dbinom{2br}{br}\right.&=&\left.\left(2^{2br-1}+\dfrac{1}{2}\dbinom{2br}{br}\right)\middle/\dbinom{2br}{br}\right.\\\notag
&=&\left.2^{2br-1}\middle/\dbinom{2br}{br}\right.+\dfrac{1}{2}\\\notag
&\sim&\dfrac{\sqrt{br\pi}+1}{2} \\\notag
\end{eqnarray} using Stirling's approximation. Thus,

\begin{equation*}
\lim\limits_{r\rightarrow\infty}\left.\sum\limits_{i=0}^{br}\dbinom{2br}{i}\middle/r\dbinom{2br}{br}\right.=\lim\limits_{r\rightarrow\infty}\dfrac{\sqrt{br\pi}+1}{2r}=0.
\qedhere \end{equation*}
\end{proof}

\begin{corollary}\label{corollary:limitk}
Let $\ell=ar$ and $k=br$ for positive integers $a$, $b$, and $r$. Then

\[
\lim\limits_{\ell\rightarrow\infty}\dfrac{\mathbb{E}[\ell,k]}{\ell}=\dfrac{1}{2}
\] when $\ell\geq2k$, and

\[
\lim\limits_{\ell\rightarrow\infty}\dfrac{\mathbb{E}[\ell,k]}{\ell}=\dfrac{b}{a}
\] when $\ell<2k$.
\end{corollary}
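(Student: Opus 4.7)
The plan is to invoke Theorem~\ref{theorem:expectedk} directly and extract a dominant term together with an error term, then apply Lemma~\ref{lemma:helpful} to show the error vanishes after dividing by $\ell$. Each of the three cases in Theorem~\ref{theorem:expectedk} has the shape $\mathbb{E}[\ell,k] = L(\ell,k) + S(\ell,k)/\binom{\ell}{k}$, where $L(\ell,k)$ is either $\frac{\ell+1}{2}$, $\frac{\ell}{2}$, or $k$ (the ``leading'' contribution), and $S(\ell,k)$ is a sum of binomial coefficients $\binom{\ell}{j}$. Dividing through by $\ell$, the term $L(\ell,k)/\ell$ will instantly yield the claimed limit; the entire task reduces to showing $S(\ell,k)/(\ell\binom{\ell}{k}) \to 0$ as $r\to\infty$.

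For the case $\ell\ge 2k$ (so $a\ge 2b$), substitute $\ell=ar$, $k=br$ in cases~1 and~2 of Theorem~\ref{theorem:expectedk}. Both sums $\sum_{i=0}^{k/2-1}\binom{\ell}{k-2i-2}$ and $\sum_{i=0}^{(k-1)/2}\binom{\ell}{k-2i-1}$ have indices bounded above by $k$, so each is at most $\sum_{j=0}^{br}\binom{ar}{j}$. Dividing by $r\binom{ar}{br}$ and using Lemma~\ref{lemma:helpful} with the very parameters $(a,b)$ gives $S(\ell,k)/(\ell\binom{\ell}{k}) \to 0$. Combined with $(\ell+1)/(2\ell)\to \tfrac12$ and $\ell/(2\ell)=\tfrac12$, this yields the first limit.

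The case $\ell<2k$ (so $a<2b$) is the one where a small trick is needed, and this is where I expect the main (minor) obstacle. Here the indices in $\sum_{i=0}^{(\ell-k-1)/2}\binom{\ell}{k+2i+1}$ lie between $k+1$ and $\ell$, so the sum is bounded by $\sum_{j=k+1}^{\ell}\binom{\ell}{j}$. To bring this into the form of Lemma~\ref{lemma:helpful}, apply the symmetry $\binom{\ell}{j}=\binom{\ell}{\ell-j}$ to rewrite the bound as $\sum_{j=0}^{\ell-k-1}\binom{\ell}{j} \le \sum_{j=0}^{(a-b)r}\binom{ar}{j}$. The condition $\ell<2k$ translates to $a<2b$, i.e.\ $2(a-b)<a$, so Lemma~\ref{lemma:helpful} applies with parameters $(a,\,a-b)$ in place of $(a,b)$. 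Since $\binom{ar}{(a-b)r}=\binom{ar}{br}$, we conclude $S(\ell,k)/(\ell\binom{\ell}{k})\to 0$, and the leading term contributes $k/\ell = b/a$, giving the second limit.

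In short, the whole argument is a case-by-case calculation in which the work has already been done in the earlier results: the correct choice of parameters for Lemma~\ref{lemma:helpful} in the subcritical regime (using complementation) is the only conceptual point, and both regimes agree at the boundary $a=2b$ since then $b/a = \tfrac12$.
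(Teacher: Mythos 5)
Your proposal is correct and follows essentially the same route as the paper, which disposes of this corollary in one sentence by observing that the sums in Theorem~\ref{theorem:expectedk} are partial sums of the sum in Lemma~\ref{lemma:helpful}; your write-up simply supplies the details. The complementation step you flag as the one conceptual point (replacing the parameters by $(a,a-b)$ via $\binom{\ell}{j}=\binom{\ell}{\ell-j}$) is exactly what the paper sets up in the displayed remark after Theorem~\ref{theorem:expectedk} with the substitution $k'=\ell-k$.
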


The proof follows since the sums in Theorem \ref{theorem:expectedk} are partial sums of the sum in Lemma \ref{lemma:helpful}. 

\begin{example}
Consider the case where there are the same even number of red and green cars; let $\ell=4n$ and $k=2n$. The second formula above simplifies to

\[
\frac{2n\dbinom{4n}{2n}+\sum\limits_{i=0}^{n}\dbinom{4n}{2i+1}}{\dbinom{4n}{2n}}=2n+\frac{2^{4n-2}}{\dbinom{4n}{2n}}=k+\frac{2^{2k-2}}{\dbinom{2k}{k}}.
\] The same simplification occurs when $k$ is odd.
\end{example}

\section{A Connection to Domino Snakes}\label{section:domino}
Recall that we let $B_{\ell,k}$ denote the set of arrival sequences with exactly $k$ zeros, and we let $R_{\ell,k}$ denote the sum of the number of cars in the right lane for all arrival sequences in $B_{\ell,k}$. We noted in Section~\ref{section:kredcars}
that the second column of Table~\ref{table:Rlk}, is the sequence $R_{\ell,1}$ that begins: \[1, 3, 6, 9, 15, 19, 28, 33, 45, 51, 66, 73, 91, 99.\] These numbers are listed in the Online Encyclopedia of Integer Sequences (OEIS) as sequence \href{https://oeis.org/A031940}{A031940}, and that entry states (without proof or citations) that this sequence describes the length of the longest legal domino snake using a full set of dominoes up to $[\ell:\ell]$, which we denote $D_\ell$, and the number $T_\ell$ of edges in a longest trail on the complete graph on $\ell$ vertices with loops, which we denote $K_\ell^\circ$ \cite{oeis}. A domino snake is a single line of dominoes laid out so that the ends match.  Example~\ref{example:snake} shows examples of some domino snakes and their corresponding trails in $K_4^\circ$.

       \begin{example} \label{example:snake}
    Let $n=4$. There will always be one domino leftover. Here is one possible longest snake of length $9$, and its corresponding path in $K_4^\circ$.
    \[ [4:1][1:1][1:3][3:2][2:2][2:4][4:4][4:3][3:3] \]
\begin{center}
\begin{tikzpicture}[auto,node distance=2cm,
                thick,main node/.style={circle,draw,font=\bfseries}, scale=.7]

  \node[main node] (1) {1};
  \node[main node] (2) [right of=1] {2};
  \node[main node] (4) [below of=1] {4};
  \node[main node] (3) [below of=2] {3};

  \draw  (1) edge[loop above] node {} (1);
   \draw  (2) edge[loop above] node {} (2);
  \draw  (3) edge[loop below] node {} (3);
  \draw  (4) edge[loop below] node {} (4);
  \draw   (1) [dashed] edge node {} (2);
 \draw    (1) edge node {} (3);  
  \draw   (1) edge node {} (4);  
  \draw   (2) edge node {}  (3);  
 \draw    (2) edge node {} (4);  
 \draw    (3) edge node {} (4); 
\end{tikzpicture}
  \end{center}
     \[4 \to 1 \to 1 \to 3 \to 3 \to 2 \to 2 \to 4 \to  4 \to 3 \to 3 \]

    \end{example}
    
In this section, we prove the following result, and we give an explicit bijection between the set of cars in the right lane of all arrival sequences of length $\ell$ with exactly one red car and the edges in a longest trail in the complete graph with loops $\Klo$.

\begin{theorem} \label{thm:RDT}
All of these sequences can be computed as: \[R_{\ell,1} = D_\ell = T_\ell =  \begin{cases} 
\binom{\ell}{2}+\ell & \text{ if } \ell \text{ is odd} \\
\binom{\ell}{2} + \frac{\ell}{2}+1 & \text{ if } \ell \text{ is even.}
\end{cases}\]
\end{theorem}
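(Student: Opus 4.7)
The plan is to prove the three equalities $R_{\ell,1} = D_\ell = T_\ell$ and verify that the common value equals the stated closed formula. The equality $D_\ell = T_\ell$ is essentially definitional: sending each domino $[i{:}j]$ to the edge $\{i,j\}$ of $\Klo$ (a loop if $i=j$) is a bijection between the $\binom{\ell+1}{2}$ dominoes and the edge set of $\Klo$, and the rule that consecutive dominoes in a snake share values on their matching ends translates exactly into the trail condition, so the longest snake length equals the longest trail length.

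For $T_\ell$ I would use Eulerian trail theory. Every vertex of $\Klo$ has degree $(\ell-1)+2=\ell+1$, since the $\ell-1$ ordinary edges each contribute $1$ and the loop contributes $2$. When $\ell$ is odd, every vertex has even degree and the graph is connected, so $\Klo$ has an Eulerian circuit and $T_\ell=\binom{\ell}{2}+\ell$. When $\ell$ is even, all $\ell$ vertices have odd degree; since any trail has at most two odd-degree vertices (its endpoints), enough edges must be omitted so that the remaining subgraph has at most two. Deleting a loop leaves all parities unchanged, while deleting a non-loop edge toggles exactly two parities, so at least $\tfrac{\ell}{2}-1$ non-loop edges must be deleted, and this lower bound is attained by a matching on $\ell-2$ of the vertices. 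A brief connectivity check shows the resulting subgraph still has an Eulerian trail, giving $T_\ell = \binom{\ell}{2}+\ell-\bigl(\tfrac{\ell}{2}-1\bigr) = \binom{\ell}{2}+\tfrac{\ell}{2}+1$.

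For $R_{\ell,1}$ I would enumerate the sequences in $\Bl$ by the position $p$ of the single red car, writing $\bb_p = 1^{p-1}\,0\,1^{\ell-p}$. A short induction shows that after $m$ initial green cars the right lane holds $\lceil m/2\rceil$ cars and the left lane holds $\lfloor m/2\rfloor$; the red car then enters the right lane, and resuming the greedy rule for the remaining $\ell-p$ green cars determines $\rr(\bb_p)$. Case analysis on the parities of $\ell$ and $p$ yields the following: when $\ell$ is odd, $\rr(\bb_p)=(\ell+1)/2$ for every $p$, so $R_{\ell,1} = \ell(\ell+1)/2 = \binom{\ell}{2}+\ell$; when $\ell$ is even, $\rr(\bb_p) = \ell/2$ for $1\leq p\leq \ell-1$ while $\rr(\bb_\ell) = \ell/2+1$, so $R_{\ell,1} = (\ell-1)(\ell/2) + \ell/2 + 1 = \binom{\ell}{2}+\ell/2+1$.

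The main obstacle is the boundary behavior in the even-$\ell$ case of the $R_{\ell,1}$ computation: for interior positions $p<\ell$ the remaining green cars re-equalize the lanes perfectly, but when $p=\ell$ the red car arrives after the lanes have already balanced and no further green cars arrive to offset it, which is precisely why $R_{\ell,1}$ exceeds $\ell\cdot \ell/2$ by one. A secondary subtlety, also in the even-$\ell$ case, is the connectivity check ensuring that deleting a matching from $\Kl$ (while retaining all loops) leaves a subgraph on which an Eulerian trail of the claimed length exists; this is elementary for $\ell\geq 4$ but should be recorded explicitly.
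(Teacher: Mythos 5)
Your proposal is correct and follows essentially the same route as the paper: an Eulerian circuit of $\Klo$ for odd $\ell$, deletion of a near-perfect matching for even $\ell$, direct enumeration of $\rr(\bb_p)$ over the $\ell$ positions of the red car, and the edge-to-domino bijection for $D_\ell = T_\ell$. Your parity-toggling lower bound for the even case and the explicit $\lceil m/2\rceil$ lane-length computation are slightly more detailed than the paper's terse assertions, but the argument is the same.
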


\begin{proof} 
When $\ell$ is odd, the degree of every vertex of $K_\ell$ is even, so there exists an Eulerian circuit of $K_\ell$. To include the loops, simply follow the loop each time a vertex is encountered for the first time in the trail. In $K_\ell$ with loops, there are $\binom{\ell}{2}+\ell$ edges and all are used in the longest trail.

When $\ell$ is even, construct a subgraph $H_\ell\le K_\ell$ by removing the edges $(1,2)$, $(3,4)$, \dots $(\ell-3,\ell-2)$. Then every vertex has even degree except the vertices $\ell-1$ and $\ell$. There exists an Eulerian trail of $H_\ell$ with $\frac{\ell(\ell-2)}{2}+1$ edges. The complete graph $K_\ell$ could not have a longer trail because every interior vertex of the trail must have even degree. When we add in the loops like above, we have a total of $\binom{\ell}{2}+\frac{\ell}{2}+1$ edges. We conclude that $T_\ell$ is described by the polynomials stated above.

Next consider the $\ell$ arrival sequences in $\Bl$. When $\ell= 2k+1$, each has $\frac{\ell+1}{2} $ cars in the right lane. Hence, $$ R_{\ell,1} = (\ell)\left(\frac{\ell+1}{2} \right ) = \frac{\ell^2+\ell}{2} = \binom{\ell}{2}+\ell.$$

   When $\ell=2k$, $\ell-1$ arrival sequences result in  $\frac{\ell}{2}$ cars in the right lane, and one results in $\frac{\ell}{2}+1$ cars.  Hence we calculate the right lane length 
 $R_{\ell,1} = (\ell-1)\left(\frac{\ell}{2}\right ) + \frac{\ell}{2}+1 =\binom{\ell}{2}+\frac{\ell}{2}+1.$ 
We conclude that $R_{\ell,1}$ is described by the polynomials stated above.

Finally, we note there is a natural bijection from the set of trails on $K_\ell^\circ$ to the set of domino snakes by considering an orientation of the trail and mapping each edge $(i,j)$ to a domino $[i:j]$, and this bijection shows that $D_\ell = T_\ell$ for $\ell \geq 1$.
\end{proof}

We remark that it is a fun exercise to also generate these domino snakes (and longest trails) recursively, and then to show the recursion satisfies the closed formula in Theorem~\ref{thm:RDT}, but we will not include that here.

Let $\Bl$ denote the set of arrival sequences with exactly 1 zero (or red car). The rest of this section is dedicated to describing a bijective map \[ \rho: \Bl \rightarrow \Klo \] that sends each car in the right lane of an arrival sequence in $\Bl$ to an edge in a longest trail in $\Klo.$

  The definition  of the map $\rho: \Bl \rightarrow \Klo$ depends on the parity of $\ell$ and the parity of the index $p$ where the unique zero in the arrival sequence $\bb$ in $\Bl$ appears. For instance, the tables in Figure~\ref{fig:tables} show the image of $\rho$ for each string in $\Bl$ for $\ell=6$ and $\ell = 7$.
The cars in the right lane of each arrival sequence are highlighted in color (red or blue), with the unique car corresponding to a zero in the arrival sequence highlighted in red.  (The blue entries are in fact bounces, which matches our previous notation.) We note that the edges $(1,2), (3,4), \ldots, (2k-3,2k-2)$ are not included in the image of $\rho$ when $\ell =2k$, but these edges are included in the image of $\rho$ when $\ell = 2k+1$.  

 \begin{figure}[h]

\begin{tabular}{|c|c|c|} \hline
$\bb$ &  $\rho(\bb)$ &  $r(\bb)$\\  \hline
\tcr{0}1\tcb{1}1\tcb{1}1     & $ \{\tcr{(1,1)},\tcb{(1,3),(1,5)} \}$ & 3 \\
\tcb{1}\tcr{0}11\tcb{1}1     & $ \{\cancel{(1,2)},\tcr{(2,2)},\tcb{(2,4),(2,6)} \}$ & 3 \\
\tcb{1}1\tcr{0}1\tcb{1}1     & $ \{\tcb{(2,3)},\tcr{(3,3)},\tcb{(3,5)} \}$ & 3 \\
\tcb{1}1\tcb{1}\tcr{0}11     & $ \{\tcb{(1,4)},\cancel{(3,4)},\tcr{(4,4)},\tcb{(4,6)} \}$ & 3 \\
\tcb{1}1\tcb{1}1\tcr{0}1     & $ \{\tcb{(2,5),(4,5)},\tcr{(5,5)} \}$ & 3 \\
\tcb{1}1\tcb{1}1\tcb{1}\tcr{0}     & $ \{\tcb{(1,6),(3,6),(5,6)},\tcr{(6,6)} \}$ & 4 \\
     &  &  \\
 \hline
\end{tabular}
\begin{tabular}{|c|c|c|} \hline
$\bb$ &  $\rho(\bb)$ &  $r(\bb)$\\  \hline
\tcr{0}1\tcb{1}1\tcb{1}1\tcb{1}     & $ \{\tcr{(1,1)},\tcb{(1,3),(1,5),(1,7)}  \}$ & 4 \\ 
\tcb{1}\tcr{0}11\tcb{1}1\tcb{1}     & $ \{\tcb{(1,2)},\tcr{(2,2)},\tcb{(2,4),(2,6)} \}$ & 4 \\
\tcb{1}1\tcr{0}1\tcb{1}1\tcb{1}     & $ \{\tcb{(2,3)},\tcr{(3,3)},\tcb{(3,5),(3,7)}  \}$ & 4 \\
\tcb{1}1\tcb{1}\tcr{0}11\tcb{1}     & $ \{\tcb{(1,4),(3,4)},\tcr{(4,4)},\tcb{(4,6)} \}$ & 4 \\
\tcb{1}1\tcb{1}1\tcr{0}1\tcb{1}     & $ \{\tcb{(2,5),(4,5)},\tcr{(5,5)},\tcb{(5,7)} \}$ & 4 \\
\tcb{1}1\tcb{1}1\tcb{1}\tcr{0}1     & $ \{\tcb{(1,6),(3,6),(5,6)},\tcr{(6,6)} \}$ & 4 \\
\tcb{1}1\tcb{1}1\tcb{1}1\tcr{0}    & $\{\tcb{(2,7),(4,7),(6,7)},\tcr{(7,7)} \}$  & 4\\
\hline
\end{tabular} 
\caption{Arrival sequences in $\Bl$ and their images under $\rho$ when $\ell=6,7$.} \label{fig:tables}
 \end{figure}

\noindent 
 If car $c$ is in the right lane of the arrival sequence with a zero in position $p$, then $\rho$ is defined as follows with the even $\ell$ on the left and the odd $\ell$ on the right. 
$$
\rho (c,p) = 
\begin{cases} (c+1,p) & c<p, p \text{ is odd } \\
(c,p) & c \leq p, c \neq p-1, p \text{ is even } \\
 (p,\ell) & c =p-1, p \text{ is even }\\
(p,c) & c \geq p, p \text{ is odd }\\
(p,c-1) & c >p, p \text{ is even} 
\end{cases} \hspace{12pt}
\rho (c,p) = 
\begin{cases} (c+1,p) & c<p, p \text{ is odd } \\
(p,p) & c =p \\
(p,c) & c >p, p \text{ is odd } \\
(c,p) & c<p, p \text{ is even }\\
(p,c-1) & c >p, p \text{ is even}
\end{cases}
$$

 \noindent 
 For the inverse map, $i\leq j$ for each edge $(i,j)$ in $\Klo$, again with even $\ell$ on the left and the odd $\ell$ on the right. 
$$
\rho^{-1} (i,j) = 
\begin{cases} (j,i) & i, j \text{ odd, } i\neq j\\
(i-1,i) & i, j \text{  even, } j=\ell \\ 
(j+1,i) & i, j \text{  even, } j\neq i,\ell \\
(i-1,j) & i \text{ is even, } j \text{ is odd} \\
(i,j) & i \text{ is odd, } j \text{ is even} \\
(i,j) & i=j
\end{cases}
\hspace{1cm}
\rho^{-1} (i,j) = 
\begin{cases} (j,i) & i, j \text{ odd, } i\neq j\\
(j+1,i) & i, j \text{  even, } i\neq j  \\
(i-1,j) & i \text{ is even, } j \text{ is odd} \\
(i,j) & i \text{ is odd, } j \text{ is even} \\
(i,j) & i=j
\end{cases}
$$

\begin{proposition} \label{prop:sets}
The map $\rho$ defines a bijection between the set of cars in the right lane of all the arrival sequences in $\Bl$ to the set of edges in a longest trail in $\Klo$.
Moreover, this bijection implies that  $R_{\ell,1} = T_\ell$ for $\ell \in \mathbb{N}$.
\end{proposition}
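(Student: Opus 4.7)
The plan is a direct case-by-case verification. The first step is to make the image of $\rho$ explicit: for each arrival sequence $\bb \in \Bl$, parameterized by the position $p$ of its unique zero, I would describe which cars land in the right lane. Before the zero, cars $1, 2, \ldots, p-1$ are green ones that alternate right, left, right, left, \ldots, so the right lane initially contains the odd-indexed cars in $\{1,\ldots,p-1\}$. Car $p$ is forced to the right lane. After car $p$, the left lane is shorter by either $1$ (if $p$ is odd) or $2$ (if $p$ is even), so the subsequent ones go to the left lane until the lanes equalize, after which they alternate, each bounce forcing a green car into the right lane. This yields an explicit list $\{c : \text{car } c \text{ is in the right lane of } \bb\}$ whose total size, summed over $p$, agrees with $R_{\ell,1}$ as computed in the proof of Theorem~\ref{thm:RDT}.

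The second step is to confirm that for each right-lane car $(c,p)$ the edge $\rho(c,p)$ lies in the longest trail of $\Klo$ described in Theorem~\ref{thm:RDT}. For $\ell$ odd, the longest trail uses every edge of $K_\ell^\circ$, so the only thing to check is that the five cases of $\rho$ produce pairwise distinct edges as $p$ and $c$ vary. For $\ell = 2k$ even, the longest trail excludes the edges $(1,2),(3,4),\ldots,(2k-3,2k-2)$. The one case that would produce such an edge (namely $c=p-1$ with $p$ even, $p<\ell$) is specifically redirected by $\rho$ to $(p,\ell)$. I would verify that precisely $k-1$ such redirects occur, matching the $k-1$ even $p \in \{2,4,\ldots,2k-2\}$, and that no edge $(p,\ell)$ with $p$ even is generated twice by the combined image of the five cases.

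The third step is to verify that the stated formula for $\rho^{-1}$ really inverts $\rho$. For each of the five cases in the definition, I would substitute the output $(i,j)=\rho(c,p)$ and check by inspection of the parities of $i$ and $j$ that we recover $(c,p)$ under the correct case of $\rho^{-1}$. The five cases produce edges with disjoint parity type (both endpoints odd; both even with $j=\ell$; both even with $j\neq\ell$; $i$ even and $j$ odd; $i$ odd and $j$ even; loops), so the formula is unambiguous and a direct substitution confirms both compositions equal the identity. Since $\rho$ has a two-sided inverse, the bijectivity follows; taking cardinalities then gives $R_{\ell,1} = T_\ell$ immediately, and combined with Theorem~\ref{thm:RDT} this also re-proves the closed formula.

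The principal obstacle is bookkeeping. The rules for $\rho$ split on the parities of both $\ell$ and $p$ and have distinct subcases for $c<p$, $c=p$, $c=p-1$ (when $\ell$ is even), and $c>p$. The hardest part will be the case $\ell = 2k$ even, where one must account precisely for the redirected edges to $(p,\ell)$ and confirm that these redirects cover exactly the edges of the form $(p,\ell)$ that are not already produced by the generic $c<p$ or $c>p$ rules, while simultaneously avoiding the excluded edges $(1,2),(3,4),\ldots,(2k-3,2k-2)$.
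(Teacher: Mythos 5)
Your proposal is correct and follows essentially the same strategy as the paper's proof: establish injectivity via the explicit formula for $\rho^{-1}$, use the cardinality count from Theorem~\ref{thm:RDT}, and identify the image of $\rho$ with a longest trail in $\Klo$ (handling the even case by tracking the excluded edges $(1,2),(3,4),\ldots,(\ell-3,\ell-2)$ and the redirected edges $(p,\ell)$). The paper argues the even case slightly more abstractly, noting that the image is a connected subgraph with exactly two odd-degree vertices ($\ell$ and $\ell-1$) and hence admits an Eulerian trail of maximal length, but this is only a minor variation on your more explicit bookkeeping.
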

\begin{proof}
The map $\rho$ is invertible when restricted to its image, and Theorem~\ref{thm:RDT} proves that its image is the correct size of a longest trail in $\Klo.$  When $\ell$ is odd, the image
of $\rho$ contains all $\binom{\ell}{2}+\ell$ edges in $\Klo$, so it forms an Eulerian circuit in $\Klo.$ When $\ell$ is even, the image defines a longest trail in $\Klo$ because there are exactly two vertices in the image of $\rho$ that have odd degree, which are the vertices labeled $\ell$ and $\ell-1$, and moreover, vertex $\ell$ is adjacent to every other vertex. We conclude that the image of $\rho$ is a connected subgraph with exactly two vertices having odd degree so it forms a longest trail in $\Klo.$
\end{proof}

We complete this section by revisiting  Example~\ref{example:snake}.
\begin{example}
One can see that edges in $\rho(\bb)$ correspond to the edges in the longest trail 
$4 \to 1 \to 1 \to 3 \to 3 \to 2 \to 2 \to 4 \to  4 \to 3 \to 3 $
and also the longest domino snake   $ [4:1][1:1][1:3][3:2][2:2][2:4][4:4][4:3][3:3] $.
\begin{center}
    \begin{tabular}{|c|c|c|} \hline
$\bb$ &  $\rho(\bb)$ &  $r(\bb)$\\  \hline
\tcr{0}1\tcb{1}1     & $ \{\tcr{(1,1)},\tcb{(1,3)} \}$ & 2 \\ 
\tcb{1}\tcr{0}11     & $ \{\cancel{(1,2)} ,\tcr{(2,2)}, \tcb{(2,4)} \}$ & 2 \\
\tcb{1}1\tcr{0}1     & $ \{\tcb{(2,3)},\tcr{(3,3)} \}$ & 2 \\
\tcb{1}1\tcb{1}\tcr{0}    & $ \{\tcb{(1,4),(3,4)},\tcr{(4,4)} \}$ & 3 \\  \hline
\end{tabular}
\end{center}
\end{example}

%\section{Enumerating final structures}
\section{Color-blind equivalence classes}\label{section:equiv_classes}
Consider the two arrival sequences $ \aa=01110$ and $\bb=11110$. They are equivalent in the sense that the first, third, and fifth car in each arrival sequence end up in the right lane. So if the sides of the cars were labeled by their starting position in each arrival sequence, as depicted in Figure~\ref{fig:ar3}, a color-blind observer would not be able to differentiate their final structures.   
When we disregard color, the final structure of the cars is completely determined by the \textbf{right lane vector} $\rlv$ that records the order of the cars in the right lane. For instance, the two arrival sequences listed above have right lane vectors $\rlv(01110)=\rlv(11110)=(1,3,5)$.

\begin{figure} [h]
\begin{tikzpicture} [scale=0.9]
  \begin{scope}[scale=0.5]
     \shade[top color=red, bottom color=white, shading angle={45}]
    [draw=black,fill=red!20,rounded corners=1.2ex,very thick] (1.5,.5) rectangle (6.5,1.8);
    \draw[very thick, rounded corners=0.5ex,fill=black!20!blue!20!white,thick]  (2.5,1.8) -- ++(1,0.7) -- ++(1.6,0) -- ++(0.6,-0.7) -- (2.5,1.8);
    \draw[thick]  (4.2,1.8) -- (4.2,2.5);
    \draw[draw=black,fill=gray!50,thick] (2.75,.5) circle (.5);
    \draw[draw=black,fill=gray!50,thick] (5.5,.5) circle (.5);
    \draw[draw=black,fill=gray!80,semithick] (2.75,.5) circle (.4);
    \draw[draw=black,fill=gray!80,semithick] (5.5,.5) circle (.4);
    \draw[-,semithick] (0,3.5) -- (7.5,3.5);
    \draw[dashed,thick] (0,-0.5) -- (7.5,-0.5);
    \draw (4,1.2) node {\Large {1}};
  \end{scope}
   \begin{scope}[xshift=100,scale=0.5] 
    \shade[top color=green, bottom color=white, shading angle={45}]
    [draw=black,fill=red!20,rounded corners=1.2ex,very thick] (1.5,.5) rectangle (6.5,1.8);
    \draw[very thick, rounded corners=0.5ex,fill=black!20!blue!20!white,thick]  (2.5,1.8) -- ++(1,0.7) -- ++(1.6,0) -- ++(0.6,-0.7) -- (2.5,1.8);
    \draw[thick]  (4.2,1.8) -- (4.2,2.5);
    \draw[draw=black,fill=gray!50,thick] (2.75,.5) circle (.5);
    \draw[draw=black,fill=gray!50,thick] (5.5,.5) circle (.5);
    \draw[draw=black,fill=gray!80,semithick] (2.75,.5) circle (.4);
    \draw[draw=black,fill=gray!80,semithick] (5.5,.5) circle (.4);
    \draw[-,semithick] (0,3.5) -- (7,3.5);
    \draw[dashed,thick] (0,-0.5) -- (7,-0.5);
    \draw (4,1.2) node {\Large {3}};
  \end{scope}
  \begin{scope}[xshift=200,scale=0.5] 
    \shade[top color=red, bottom color=white, shading angle={45}]
    [draw=black,fill=red!20,rounded corners=1.2ex,very thick] (1.5,.5) rectangle (6.5,1.8);
    \draw[very thick, rounded corners=0.5ex,fill=black!20!blue!20!white,thick]  (2.5,1.8) -- ++(1,0.7) -- ++(1.6,0) -- ++(0.6,-0.7) -- (2.5,1.8);
    \draw[thick]  (4.2,1.8) -- (4.2,2.5);
    \draw[draw=black,fill=gray!50,thick] (2.75,.5) circle (.5);
    \draw[draw=black,fill=gray!50,thick] (5.5,.5) circle (.5);
    \draw[draw=black,fill=gray!80,semithick] (2.75,.5) circle (.4);
    \draw[draw=black,fill=gray!80,semithick] (5.5,.5) circle (.4);
    \draw[-,semithick] (0,3.5) -- (7,3.5);
    \draw[dashed,thick] (0,-0.5) -- (7,-0.5);
    \draw (4,1.2) node {\Large {5}};
  \end{scope}
 \end{tikzpicture}
 \vspace{0.5cm}
 \begin{tikzpicture} [scale=0.9]
  \begin{scope}[scale=0.5] 
     \shade[top color=green, bottom color=white, shading angle={45}]
    [draw=black,fill=red!20,rounded corners=1.2ex,very thick] (1.5,.5) rectangle (6.5,1.8);
    \draw[very thick, rounded corners=0.5ex,fill=black!20!blue!20!white,thick]  (2.5,1.8) -- ++(1,0.7) -- ++(1.6,0) -- ++(0.6,-0.7) -- (2.5,1.8);
    \draw[thick]  (4.2,1.8) -- (4.2,2.5);
    \draw[draw=black,fill=gray!50,thick] (2.75,.5) circle (.5);
    \draw[draw=black,fill=gray!50,thick] (5.5,.5) circle (.5);
    \draw[draw=black,fill=gray!80,semithick] (2.75,.5) circle (.4);
    \draw[draw=black,fill=gray!80,semithick] (5.5,.5) circle (.4);
    \draw[-,semithick] (0,-0.5) -- (7,-0.5);
    \draw (4,1.2) node {\Large {2}};
  \end{scope}
   \begin{scope}[xshift=100,scale=0.5] 
    \shade[top color=green, bottom color=white, shading angle={45}]
    [draw=black,fill=red!20,rounded corners=1.2ex,very thick] (1.5,.5) rectangle (6.5,1.8);
    \draw[very thick, rounded corners=0.5ex,fill=black!20!blue!20!white,thick]  (2.5,1.8) -- ++(1,0.7) -- ++(1.6,0) -- ++(0.6,-0.7) -- (2.5,1.8);
    \draw[thick]  (4.2,1.8) -- (4.2,2.5);
    \draw[draw=black,fill=gray!50,thick] (2.75,.5) circle (.5);
    \draw[draw=black,fill=gray!50,thick] (5.5,.5) circle (.5);
    \draw[draw=black,fill=gray!80,semithick] (2.75,.5) circle (.4);
    \draw[draw=black,fill=gray!80,semithick] (5.5,.5) circle (.4);
     \draw[-,semithick] (0,-0.5) -- (7,-0.5);
    \draw (4,1.2) node {\Large {4}};
  \end{scope}
  \begin{scope}[xshift=200,scale=0.5]
   \draw[-,semithick] (0,-0.5) -- (7,-0.5);
  \end{scope}
\end{tikzpicture}
\begin{tikzpicture} [scale=0.9]
  \begin{scope}[scale=0.5]
     \shade[top color=green, bottom color=white, shading angle={45}]
    [draw=black,fill=red!20,rounded corners=1.2ex,very thick] (1.5,.5) rectangle (6.5,1.8);
    \draw[very thick, rounded corners=0.5ex,fill=black!20!blue!20!white,thick]  (2.5,1.8) -- ++(1,0.7) -- ++(1.6,0) -- ++(0.6,-0.7) -- (2.5,1.8);
    \draw[thick]  (4.2,1.8) -- (4.2,2.5);
    \draw[draw=black,fill=gray!50,thick] (2.75,.5) circle (.5);
    \draw[draw=black,fill=gray!50,thick] (5.5,.5) circle (.5);
    \draw[draw=black,fill=gray!80,semithick] (2.75,.5) circle (.4);
    \draw[draw=black,fill=gray!80,semithick] (5.5,.5) circle (.4);
    \draw[-,semithick] (0,3.5) -- (7,3.5);
    \draw[dashed,thick] (0,-0.5) -- (7,-0.5);
    \draw (4,1.2) node {\Large {1}};
  \end{scope}
   \begin{scope}[xshift=100,scale=0.5] 
    \shade[top color=green, bottom color=white, shading angle={45}]
    [draw=black,fill=red!20,rounded corners=1.2ex,very thick] (1.5,.5) rectangle (6.5,1.8);
    \draw[very thick, rounded corners=0.5ex,fill=black!20!blue!20!white,thick]  (2.5,1.8) -- ++(1,0.7) -- ++(1.6,0) -- ++(0.6,-0.7) -- (2.5,1.8);
    \draw[thick]  (4.2,1.8) -- (4.2,2.5);
    \draw[draw=black,fill=gray!50,thick] (2.75,.5) circle (.5);
    \draw[draw=black,fill=gray!50,thick] (5.5,.5) circle (.5);
    \draw[draw=black,fill=gray!80,semithick] (2.75,.5) circle (.4);
    \draw[draw=black,fill=gray!80,semithick] (5.5,.5) circle (.4);
    \draw[-,semithick] (0,3.5) -- (7,3.5);
    \draw[dashed,thick] (0,-0.5) -- (7,-0.5);
    \draw (4,1.2) node {\Large {3}};
  \end{scope}
  \begin{scope}[xshift=200,scale=0.5] 
    \shade[top color=red, bottom color=white, shading angle={45}]
    [draw=black,fill=red!20,rounded corners=1.2ex,very thick] (1.5,.5) rectangle (6.5,1.8);
    \draw[very thick, rounded corners=0.5ex,fill=black!20!blue!20!white,thick]  (2.5,1.8) -- ++(1,0.7) -- ++(1.6,0) -- ++(0.6,-0.7) -- (2.5,1.8);
    \draw[thick]  (4.2,1.8) -- (4.2,2.5);
    \draw[draw=black,fill=gray!50,thick] (2.75,.5) circle (.5);
    \draw[draw=black,fill=gray!50,thick] (5.5,.5) circle (.5);
    \draw[draw=black,fill=gray!80,semithick] (2.75,.5) circle (.4);
    \draw[draw=black,fill=gray!80,semithick] (5.5,.5) circle (.4);
    \draw[-,semithick] (0,3.5) -- (7,3.5);
    \draw[dashed,thick] (0,-0.5) -- (7,-0.5);
    \draw (4,1.2) node {\Large {5}};
  \end{scope}
 \end{tikzpicture}
 \vspace{0.4cm}
 \begin{tikzpicture} [scale=0.9]
  \begin{scope}[scale=0.5]
     \shade[top color=green, bottom color=white, shading angle={45}]
    [draw=black,fill=red!20,rounded corners=1.2ex,very thick] (1.5,.5) rectangle (6.5,1.8);
    \draw[very thick, rounded corners=0.5ex,fill=black!20!blue!20!white,thick]  (2.5,1.8) -- ++(1,0.7) -- ++(1.6,0) -- ++(0.6,-0.7) -- (2.5,1.8);
    \draw[thick]  (4.2,1.8) -- (4.2,2.5);
    \draw[draw=black,fill=gray!50,thick] (2.75,.5) circle (.5);
    \draw[draw=black,fill=gray!50,thick] (5.5,.5) circle (.5);
    \draw[draw=black,fill=gray!80,semithick] (2.75,.5) circle (.4);
    \draw[draw=black,fill=gray!80,semithick] (5.5,.5) circle (.4);
    \draw[-,semithick] (0,-0.5) -- (7,-0.5);
    \draw (4,1.2) node {\Large {2}};
  \end{scope}
   \begin{scope}[xshift=100,scale=0.5] 
    \shade[top color=green, bottom color=white, shading angle={45}]
    [draw=black,fill=red!20,rounded corners=1.2ex,very thick] (1.5,.5) rectangle (6.5,1.8);
    \draw[very thick, rounded corners=0.5ex,fill=black!20!blue!20!white,thick]  (2.5,1.8) -- ++(1,0.7) -- ++(1.6,0) -- ++(0.6,-0.7) -- (2.5,1.8);
    \draw[thick]  (4.2,1.8) -- (4.2,2.5);
    \draw[draw=black,fill=gray!50,thick] (2.75,.5) circle (.5);
    \draw[draw=black,fill=gray!50,thick] (5.5,.5) circle (.5);
    \draw[draw=black,fill=gray!80,semithick] (2.75,.5) circle (.4);
    \draw[draw=black,fill=gray!80,semithick] (5.5,.5) circle (.4);
     \draw[-,semithick] (0,-0.5) -- (7,-0.5);
    \draw (4,1.2) node {\Large {4}};
  \end{scope}
  \begin{scope}[xshift=200,scale=0.5]
   \draw[-,semithick] (0,-0.5) -- (7,-0.5);
  \end{scope}
\end{tikzpicture}
\begin{tikzpicture} [scale=0.9]
  \begin{scope}[scale=0.5]
     \shade[top color=brown, bottom color=white, shading angle={45}]
    [draw=black,fill=red!20,rounded corners=1.2ex,very thick] (1.5,.5) rectangle (6.5,1.8);
    \draw[very thick, rounded corners=0.5ex,fill=black!20!blue!20!white,thick]  (2.5,1.8) -- ++(1,0.7) -- ++(1.6,0) -- ++(0.6,-0.7) -- (2.5,1.8);
    \draw[thick]  (4.2,1.8) -- (4.2,2.5);
    \draw[draw=black,fill=gray!50,thick] (2.75,.5) circle (.5);
    \draw[draw=black,fill=gray!50,thick] (5.5,.5) circle (.5);
    \draw[draw=black,fill=gray!80,semithick] (2.75,.5) circle (.4);
    \draw[draw=black,fill=gray!80,semithick] (5.5,.5) circle (.4);
    \draw[-,semithick] (0,3.5) -- (7,3.5);
    \draw[dashed,thick] (0,-0.5) -- (7,-0.5);
    \draw (4,1.2) node {\Large {1}};
  \end{scope}
   \begin{scope}[xshift=100,scale=0.5] 
    \shade[top color=brown, bottom color=white, shading angle={45}]
    [draw=black,fill=red!20,rounded corners=1.2ex,very thick] (1.5,.5) rectangle (6.5,1.8);
    \draw[very thick, rounded corners=0.5ex,fill=black!20!blue!20!white,thick]  (2.5,1.8) -- ++(1,0.7) -- ++(1.6,0) -- ++(0.6,-0.7) -- (2.5,1.8);
    \draw[thick]  (4.2,1.8) -- (4.2,2.5);
    \draw[draw=black,fill=gray!50,thick] (2.75,.5) circle (.5);
    \draw[draw=black,fill=gray!50,thick] (5.5,.5) circle (.5);
    \draw[draw=black,fill=gray!80,semithick] (2.75,.5) circle (.4);
    \draw[draw=black,fill=gray!80,semithick] (5.5,.5) circle (.4);
    \draw[-,semithick] (0,3.5) -- (7,3.5);
    \draw[dashed,thick] (0,-0.5) -- (7,-0.5);
    \draw (4,1.2) node {\Large {3}};
  \end{scope}
  \begin{scope}[xshift=200,scale=0.5] 
    \shade[top color=brown, bottom color=white, shading angle={45}]
    [draw=black,fill=red!20,rounded corners=1.2ex,very thick] (1.5,.5) rectangle (6.5,1.8);
    \draw[very thick, rounded corners=0.5ex,fill=black!20!blue!20!white,thick]  (2.5,1.8) -- ++(1,0.7) -- ++(1.6,0) -- ++(0.6,-0.7) -- (2.5,1.8);
    \draw[thick]  (4.2,1.8) -- (4.2,2.5);
    \draw[draw=black,fill=gray!50,thick] (2.75,.5) circle (.5);
    \draw[draw=black,fill=gray!50,thick] (5.5,.5) circle (.5);
    \draw[draw=black,fill=gray!80,semithick] (2.75,.5) circle (.4);
    \draw[draw=black,fill=gray!80,semithick] (5.5,.5) circle (.4);
    \draw[-,semithick] (0,3.5) -- (7,3.5);
    \draw[dashed,thick] (0,-0.5) -- (7,-0.5);
    \draw (4,1.2) node {\Large {5}};
  \end{scope}
 \end{tikzpicture}
 \vspace{0.4cm}
 \begin{tikzpicture} [scale=0.9]
  \begin{scope}[scale=0.5]
     \shade[top color=brown, bottom color=white, shading angle={45}]
    [draw=black,fill=red!20,rounded corners=1.2ex,very thick] (1.5,.5) rectangle (6.5,1.8);
    \draw[very thick, rounded corners=0.5ex,fill=black!20!blue!20!white,thick]  (2.5,1.8) -- ++(1,0.7) -- ++(1.6,0) -- ++(0.6,-0.7) -- (2.5,1.8);
    \draw[thick]  (4.2,1.8) -- (4.2,2.5);
    \draw[draw=black,fill=gray!50,thick] (2.75,.5) circle (.5);
    \draw[draw=black,fill=gray!50,thick] (5.5,.5) circle (.5);
    \draw[draw=black,fill=gray!80,semithick] (2.75,.5) circle (.4);
    \draw[draw=black,fill=gray!80,semithick] (5.5,.5) circle (.4);
    \draw[-,semithick] (0,-0.5) -- (7,-0.5);
    \draw (4,1.2) node {\Large {2}};
  \end{scope}
   \begin{scope}[xshift=100,scale=0.5] 
    \shade[top color=brown, bottom color=white, shading angle={45}]
    [draw=black,fill=red!20,rounded corners=1.2ex,very thick] (1.5,.5) rectangle (6.5,1.8);
    \draw[very thick, rounded corners=0.5ex,fill=black!20!blue!20!white,thick]  (2.5,1.8) -- ++(1,0.7) -- ++(1.6,0) -- ++(0.6,-0.7) -- (2.5,1.8);
    \draw[thick]  (4.2,1.8) -- (4.2,2.5);
    \draw[draw=black,fill=gray!50,thick] (2.75,.5) circle (.5);
    \draw[draw=black,fill=gray!50,thick] (5.5,.5) circle (.5);
    \draw[draw=black,fill=gray!80,semithick] (2.75,.5) circle (.4);
    \draw[draw=black,fill=gray!80,semithick] (5.5,.5) circle (.4);
     \draw[-,semithick] (0,-0.5) -- (7,-0.5);
    \draw (4,1.2) node {\Large {4}};
  \end{scope}
  \begin{scope}[xshift=200,scale=0.5]
   \draw[-,semithick] (0,-0.5) -- (7,-0.5);
  \end{scope}
  \draw [very thick] [<-](0,-1)--(2,-1) node[right]{\text{Direction of traffic}};
\end{tikzpicture}
\caption{Arrival sequences $01\tcb{1}10$ and $\tcb{1}1\tcb{1}10$ with same $\rlv=(1,3,5)$ and their final color-blind result.}  \label{fig:ar3}
\end{figure}

This observation leads us to define the following equivalence relation on the set of all arrival sequences $B_\ell$: two arrival sequences $\aa$ and $\bb$ are \textbf{color-blind equivalent} $\mathbf{a} \sim \mathbf{b}$ if $\rlv(\mathbf{a})=\rlv(\mathbf{b})$. Given an arrival sequence $\bb \in B_\ell$, let 
$$ \mathcal{C}(\bb) = \{ \aa \in B_\ell: \rlv(\aa) = \rlv(\bb) \} $$ denote the equivalence class of all arrival sequences who are color-blind equivalent to $\bb$.

The final structure of the right lane does not depend on whether the first car is red or green, since it will always stay in the right lane. 
Hence two binary strings that only differ in their first digit will have the same right lane vector $\rlv$, which implies the size of each color-blind equivalence class is even. 

\begin{proposition}\label{prop:even_class_size}
Each color-blind equivalence class $\mathcal{C}(\bb)$ has even number of elements. 
\end{proposition}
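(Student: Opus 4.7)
The plan is to exhibit a fixed-point-free involution on $\mathcal{C}(\bb)$, which will immediately force $|\mathcal{C}(\bb)|$ to be even. The natural candidate, suggested by the paragraph preceding the statement, is the map $\sigma: B_\ell \to B_\ell$ that flips the first entry of an arrival sequence, i.e.\ $\sigma(b_1 b_2 \cdots b_\ell) = \overline{b_1} b_2 \cdots b_\ell$, where $\overline{b_1} = (b_1+1) \bmod 2$. Clearly $\sigma$ is an involution with no fixed points, so the main content is to verify that $\sigma$ restricts to a self-map of each equivalence class $\mathcal{C}(\bb)$.

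First I would observe that the first car, regardless of color, is always placed in the right lane: a red first car goes right by the red rule, and a green first car goes right by the tie-breaking rule since both lanes start empty. Thus after processing $b_1$, the state of the two lanes (namely, one car in the right lane and zero in the left) is identical for $\bb$ and for $\sigma(\bb)$. Since the lane assignments of the remaining cars $b_2, \ldots, b_\ell$ depend only on their own colors and the current lane lengths, the entire sequence of lane assignments for $\sigma(\bb)$ matches that of $\bb$. In particular, the set of indices of cars that end up in the right lane is the same, so $\rlv(\sigma(\bb)) = \rlv(\bb)$, and consequently $\sigma(\bb) \in \mathcal{C}(\bb)$.

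Having verified that $\sigma$ maps $\mathcal{C}(\bb)$ to itself and that $\sigma(\aa) \neq \aa$ for every $\aa \in \mathcal{C}(\bb)$, the involution partitions $\mathcal{C}(\bb)$ into pairs $\{\aa, \sigma(\aa)\}$, which gives $|\mathcal{C}(\bb)| \equiv 0 \pmod 2$. The only potential subtlety is the justification that flipping $b_1$ really does not perturb any subsequent lane decision; this is a one-line observation but deserves to be stated explicitly, since the entire argument rests on it. No induction, generating function, or appeal to earlier theorems is needed.
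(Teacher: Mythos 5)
Your proof is correct and follows essentially the same route as the paper, which also pairs each arrival sequence with the one obtained by flipping the first digit, using the observation that the first car always enters the right lane regardless of color. You merely make explicit the fixed-point-free involution structure and the fact that subsequent lane decisions depend only on the current lane lengths, which the paper leaves implicit.
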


In fact, if two arrival sequences only differ in places where their merging paths are touching the diagonal, then they will be in the same color-blind equivalence class.  
Table~\ref{table:rlv} shows the color-blind equivalence classes for all arrival sequences in $B_6$, and it highlights where the merging path for each arrival sequence touches the diagonal in orange. 

\begin{table}[h]
\begin{tabular}{|c|c|c||c|c|c|} \hline 
Arrival sequence & Right lane & Class size  & Arrival sequence & Right lane & Class size \\
$\bb$ & $\rlv(\bb)$ & $|\mathcal{C}(\bb)|$ & $\bb$ & $\rlv(\bb)$ & $|\mathcal{C}(\bb)|$ \\ \hline 
$\tco{0}00000$ & \multirow{2}{*}{$1, 2, 3, 4, 5, 6$} & \multirow{ 2}{*}{$2$}  & $\tco{0}00001$ & \multirow{2}{*}{$1, 2, 3, 4, 5$} & \multirow{2}{*}{$2$} \\
$\tco{1}00000$ & &  &  $\tco{1}00001$ & &\\ \hline
$\tco{0}00010$ & \multirow{2}{*}{$1, 2, 3, 4, 6$} & \multirow{ 2}{*}{$2$} &
$\tco{0}00011$ & \multirow{2}{*}{$1, 2, 3, 4$} & \multirow{ 2}{*}{$2$} \\
$\tco{1}00010$ & & & $\tco{1}00011$ & &\\ \hline
$\tco{0}00100$ & \multirow{2}{*}{$1, 2, 3, 5, 6$} & \multirow{ 2}{*}{$2$} &
$\tco{0}00101$ & \multirow{2}{*}{$1, 2, 3, 5$} & \multirow{ 2}{*}{$2$} \\
$\tco{1}00100$ & & & $\tco{1}00101$ & &\\ \hline
$\tco{0}00110$ & \multirow{2}{*}{$1, 2, 3, 6$} & \multirow{ 2}{*}{$2$} &
$\tco{0}00111$ & \multirow{2}{*}{$1, 2, 3$} & \multirow{ 2}{*}{$2$} \\
$\tco{1}00110$ & & & $\tco{1}00111$ & &\\ \hline
$\tco{0}01000$ & \multirow{2}{*}{$1, 2, 4, 5, 6$} & \multirow{ 2}{*}{$2$} &
$\tco{0}01001$ & \multirow{2}{*}{$1, 2, 4, 5$} & \multirow{ 2}{*}{$2$} \\
$\tco{1}01000$ & & & $\tco{1}01001$ & &\\ \hline
$\tco{0}01010$ & \multirow{2}{*}{$1, 2, 4, 6$} & \multirow{ 2}{*}{$2$} & $\tco{0}01011$ & \multirow{2}{*}{$1, 2, 4$} & \multirow{ 2}{*}{$2$} \\
$\tco{1}01010$ & & & $\tco{1}01011$ & &\\ \hline
$\tco{0}011\tco{0}0$ & \multirow{4}{*}{$1, 2, 5, 6$} & \multirow{ 4}{*}{$4$} &$\tco{0}011\tco{0}1$ & \multirow{4}{*}{$1, 2, 5$} & \multirow{ 4}{*}{$4$} \\ 
$\tco{0}011\tco{1}0$ & & & $\tco{0}011\tco{1}1$ & &\\
$\tco{1}011\tco{0}0$ & & & $\tco{1}011\tco{0}1$ & &\\
$\tco{1}011\tco{1}0$ & & & $\tco{1}011\tco{1}1$ & &\\\hline
$\tco{0}1\tco{0}000$ & \multirow{4}{*}{$1, 3, 4, 5, 6$} & \multirow{ 4}{*}{$4$} & $\tco{0}1\tco{0}001$ & \multirow{4}{*}{$1, 3, 4, 5$} & \multirow{ 4}{*}{$4$} \\
$\tco{0}1\tco{1}000$ & & & $\tco{0}1\tco{1}001$ & &\\
$\tco{1}1\tco{0}000$ & & & $\tco{1}1\tco{0}001$ & &\\
$\tco{1}1\tco{1}000$ & & &  $\tco{1}1\tco{1}001$ & &\\\hline
$\tco{0}1\tco{0}010$ & \multirow{4}{*}{$1, 3, 4, 6$} & \multirow{ 4}{*}{$4$} & $\tco{0}1\tco{0}011$ & \multirow{4}{*}{$1, 3, 4$} & \multirow{ 4}{*}{$4$} \\
$\tco{0}1\tco{1}010$ & & & $\tco{0}1\tco{1}011$ & &\\
$\tco{1}1\tco{0}010$ & & & $\tco{1}1\tco{0}011$ & &\\
$\tco{1}1\tco{1}010$ & & & $\tco{1}1\tco{1}011$ & &\\\hline
$\tco{0}1\tco{0}1\tco{0}0$ & \multirow{8}{*}{$1, 3, 5, 6$} & \multirow{ 8}{*}{$8$} & $\tco{0}1\tco{0}1\tco{0}1$ & \multirow{8}{*}{$1, 3, 5$} & \multirow{ 8}{*}{$8$} \\
$\tco{0}1\tco{0}1\tco{1}0$ & & & $\tco{0}1\tco{0}1\tco{1}1$ & &\\
$\tco{0}1\tco{1}1\tco{0}0$ & & & $\tco{0}1\tco{1}1\tco{0}1$ & &\\
$\tco{0}1\tco{1}1\tco{1}0$ & & & $\tco{0}1\tco{1}1\tco{1}1$ & &\\
$\tco{1}1\tco{0}1\tco{0}0$ & & & $\tco{1}1\tco{0}1\tco{0}1$ & &\\
$\tco{1}1\tco{0}1\tco{1}0$ & & & $\tco{1}1\tco{0}1\tco{1}1$ & &\\
$\tco{1}1\tco{1}1\tco{0}0$ & & & 
$\tco{1}1\tco{1}1\tco{0}1$ & &\\
$\tco{1}1\tco{1}1\tco{1}0$ & & & $\tco{1}1\tco{1}1\tco{1}1$ & &\\\hline
\end{tabular} 
\caption{Arrival sequences in $B_6$ partitioned into color-blind equivalence classes. Touches are highlighted in orange.}
\label{table:rlv}
\end{table}

Let $\tv(\bb)$ denote the vector recording the steps where the merging path of the arrival sequence $\bb$ starts off touching the diagonal $x=y$, and $\touch(\bb)$ be the number of times $\bb$ touches the diagonal.  For instance $$ \tv(\tco{1}1\tco{1}1\tco{0}0) =\tv(\tco{0}1\tco{0}1\tco{0}0)=(1,3,5) \text{ with } \touch(\tco{1}1\tco{1}1\tco{0}0)=\touch(\tco{0}1\tco{0}1\tco{0}0)=3,$$  and $$ \tv(\tco{0}011\tco{1}0) = \tv(\tco{1}011\tco{1}0) = (1,5) \text{ with } \touch(\tco{0}011\tco{1}0) =\touch(\tco{1}011\tco{1}0) =2.$$
Our main result in this section shows that the size of a color-blind equivalence class $\mathcal{C}(\bb)$ depends only on $\touch(\bb)$.
\begin{theorem} \label{th:touch}
Let $\bb \in B_\ell$ be any arrival sequence of length $\ell$. Let $\touch=\touch(\bb)$ be the number of times $\bb$ touches the diagonal. Then the color-blind equivalence class $\mathcal{C}(\bb)$ contains $2^{\touch(\bb)}$ arrival sequences, or more succinctly, 
$$ |\mathcal{C}(\bb)|  = 2^\touch .$$ 
\end{theorem}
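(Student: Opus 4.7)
The plan is to show the sharper structural statement that $\bb' \in \mathcal{C}(\bb)$ if and only if $\bb'$ agrees with $\bb$ at every non-touch position of $\bb$. Once this equivalence is established, the $\touch(\bb)$ touch positions can each independently take the value $0$ or $1$ while every other coordinate is pinned down, so the count $|\mathcal{C}(\bb)| = 2^{\touch(\bb)}$ falls out as a free choice count.

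For the forward direction, I would first observe that the right-lane vector $\rlv(\bb)$ already determines the entire merging path: the cars arrive in the fixed order $1, 2, \dots, \ell$, so $\rlv(\bb)$ encodes exactly which steps are up-steps (cars that joined the right lane) and which are right-steps (cars that joined the left lane). Hence $\rlv(\bb')=\rlv(\bb)$ forces the two lattice paths to coincide, and in particular $\tv(\bb')=\tv(\bb)$. Now fix a non-touch position $i \notin \tv(\bb)$; just before step $i$ the path is strictly above the diagonal, so the right lane is strictly longer than the left. A red car ($b_i=0$) deterministically goes right, contributing an up-step into the right lane, while a green car ($b_i=1$) goes into the strictly shorter left lane, contributing a right-step. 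Thus $i \in \rlv(\bb) \Longleftrightarrow b_i=0$, which forces $b_i'=b_i$ at every non-touch position.

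For the converse, suppose $\bb'$ agrees with $\bb$ at every coordinate outside $\tv(\bb)$. A short induction on the step index shows the merging paths of $\bb$ and $\bb'$ stay perfectly synchronized: at a touch position the current coordinates are of the form $(j,j)$, so whether the bit is $0$ (red car going right) or $1$ (green car bouncing on tied lanes) the path records the very same up-step, while at non-touch positions the bits agree by hypothesis. Consequently the full lattice paths coincide, which yields $\tv(\bb')=\tv(\bb)$ and $\rlv(\bb')=\rlv(\bb)$. The one subtlety worth flagging here, and really the only place the argument could go wrong, is this synchronization step: a priori, toggling bits at earlier touches could shift or destroy later touches and produce a genuinely different path; the saving observation is precisely that any diagonal-touching step produces an up-step independent of the bit, which keeps the two paths locked together throughout the sequence.
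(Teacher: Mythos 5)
Your proof is correct and follows essentially the same approach as the paper's: touch positions are exactly the free coordinates (any bit there is forced into the right lane), while at non-touch positions the path is strictly above the diagonal so the bit is determined by membership in $\rlv(\bb)$. Your write-up is somewhat more careful than the paper's --- in particular, the observation that $\rlv$ determines the entire lattice path, and the explicit induction showing that toggling bits at touches cannot shift later touches, make rigorous two points the paper's proof states without elaboration.
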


\begin{proof}
Every time a merging path is resting on the diagonal, the next car in the arrival sequence is forced into the right lane.  So if $\bb \in B_\ell$ has $\tv(\bb) = (1, j_2, j_3,\ldots, j_d)$ with $\touch(\bb) = d$, then we can switch the colors of any the $d$ cars in positions $\tv(\bb) = (1, j_2, j_3,\ldots, j_d)$ that were forced into the right lane after $\bb$ touched the diagonal to obtain a new merging sequence with the same right lane vector.  Conversely, when a merging path is above the diagonal, changing the color of the next car always results in a different right lane vector, because when the merging path is above the diagonal, green cars always go into the left lane and red cars always go to the right. 
We conclude that to construct $\mathcal{C}(\bb)$ combinatorially, we simply form all arrival sequences $\aa$ that agree with $\bb$ in all positions outside of $\tv(\bb)$, and we have $2^{\touch(\bb)}$ distinct choices for parity/color of the cars in positions $\tv(\bb)$. 
\end{proof}

\section{Future Work}

There are many possible variations on this problem, some developed by waiting in real-life traffic (as with the original problem) and some more abstract variations that may only apply to higher-dimensional traffic jams. We encourage anyone pursuing these problems to make good use of the OEIS, as we were frequently (pleasantly) surprised at the myriad connections to other areas of combinatorics.

Our first open problem considers the possibility that red and green cars are not evenly distributed in the arrival sequence, with green cars more likely to appear earlier in the sequence. This corresponds to the notion that drivers who pick the shortest lane are also the faster drivers.

\begin{oquestion}
Corollary \ref{corollary:limit} and Corollary \ref{corollary:limitk} give unsurprising results about what will happen to the expected length of the right lane as the number of cars gets large with the percentage of red cars held constant. How does this expected value change when we weight the arrival sequences so that sequences with more green cars in the front have a larger weight (probability of occurring)?
\end{oquestion}

Our work in this article has focused solely on two lanes merging. In reality, we may encounter three or more lanes, and there are choices for how to represent this mathematically. The next open problem describes one of these possibilities.

\begin{oquestion}
Consider three lanes merging into a single right lane with three types of drivers:
\begin{itemize}
    \item Those that pick the right lane only.
    \item Those that pick the shortest of the two right lanes.
    \item Those that pick the shortest of all the lanes.
\end{itemize}
How many of these merging paths reach the point $(m_1,m_2,m_3)$? What is the expected length of the right lane for all arrival sequences with a specific number of drivers of each type? This could also be extended to any number of lanes.
\end{oquestion}

Once the problem has been generalized to more lanes, it's natural to ask whether any of the connections to other combinatorial objects remains. The next two open questions address a couple of these connections.

\begin{oquestion}
In Section \ref{section:domino}, we mapped a subset of arrival sequences to the longest trail in a complete graph with loops. Can we generalize this to trails in hypergraphs when there are more than two lanes? There are multiple ways to define a trail in a hypergraph \cite{B17,L10}, but very little is currently known about the length of the longest trail or its connection to other combinatorial objects.
\end{oquestion}

\begin{oquestion}
In Section~\ref{section:bijection}, we found a connection to the expected maximum number of heads or tails in a set of coin flips. With $n$ lanes, is there a connection to the expected maximum number a face appears in $\ell$ rolls of an $n$-sided die?
\end{oquestion}

Another line of inquiry asks whether we are representing drivers appropriately as red or green drivers. More likely, each individual acts as a green driver with some fixed probability. However, representing drivers overall with this dichotomy likely mimics each person's individual likelihood of choosing the left or right lane. A different situation arises if we imagine drivers only choose the left lane if it is ``much'' shorter than the right lane. How much shorter? We could fix it at a certain number of cars, where our work so far has consisted of the case where green drivers choose the left when it is at least 1 car shorter. Or we could let it depend on the individual driver, leading to the following open question.

\begin{oquestion}
Consider the merging problem where green cars only choose the left lane if it is $c>1$ cars shorter than the right lane. Alternately, suppose car $i$ is associated with a value $c_i\in\mathbb{N}\cup\{\infty\}$ so that car $i$ will only choose the left lane if the difference between the lane lengths is at least $c_i$.
\end{oquestion}

Our final questions considers the case where the left lane has a fixed length $m$, so that once the right lane fills up with $m$ cars, no more cars can enter that lane. How is this real-world condition affecting cars who would like to move into that lane but are unable to?

\begin{oquestion}
Consider the merging problem where each lane has a capacity of $m$ cars and arrival sequences of length $2m-1$. Once the right lane reaches $m$ cars, then no more cars will be able to enter the left lane. What is the expected number of cars missing from the left lane?  
\end{oquestion}

\bibliography{Bibliography}
\bibliographystyle{plain}
\end{document}